\newtheorem{thm}{Theorem}[section]
\newtheorem{cor}[thm]{Corollary}
\newtheorem{lem}[thm]{Lemma}
\newtheorem{prop}[thm]{Proposition}
\theoremstyle{definition}
\newtheorem{defn}[thm]{Definition}
\newtheorem{notation}[thm]{Notation}
\theoremstyle{remark}
\numberwithin{equation}{section}
\DeclareSymbolFont{cyrletters}{OT2}{wncyr}{m}{n}
\DeclareMathSymbol{\Sha}{\mathalpha}{cyrletters}{"58}
\newcommand{\Z}{{\mathbb Z}}
\newcommand{\Q}{{\mathbb Q}}
\title[Potentially Diagonalizable modular lifts of large weight]{Potentially Diagonalizable modular lifts of large weight}
\author{\sc Iv\'an Blanco-Chac\'on}
\address{Department of Physics and Mathematics\\
School of Science\\
University of Alcal\'a de Henares\\
Madrid\\
Spain}
\email{ivan.blancoc@uah.es}
\thanks{Partially supported by MTM2016-79400-P}
\author{\sc Luis Dieulefait}
\address{Department of Algebra\\
Faculty of Mathematics and Computer Science\\
University of Barcelona\\
Barcelona\\
Spain}
\email{ldieulefait@ub.edu}
\begin{document}
\renewcommand\baselinestretch{1.2}
\renewcommand{\arraystretch}{1}
\def\base{\baselineskip}
\font\tenhtxt=eufm10 scaled \magstep0 \font\tenBbb=msbm10 scaled
\magstep0 \font\tenrm=cmr10 scaled \magstep0 \font\tenbf=cmb10
scaled \magstep0


\def\evenhead{{\protect\centerline{\textsl{\large{I. Blanco}}}\hfill}}

\def\oddhead{{\protect\centerline{\textsl{\large{On the non vanishing of the cyclotomic $p$-adic $L$-functions}}}\hfill}}

\pagestyle{myheadings} \markboth{\evenhead}{\oddhead}

\thispagestyle{empty}

\maketitle

\begin{abstract}We prove that for a Hecke cuspform $f\in S_k(\Gamma_0(N),\chi)$ and a prime $l>\max\{k,6\}$ such that $l\nmid N$, there exists an infinite family $\{k_r\}_{r\geq 1}\subseteq\mathbb{Z}$ such that for each $k_r$, there is a cusp form  $f_{k_r}\in S_{k_r}(\Gamma_0(N),\chi)$ such that the Deligne representation $\rho_{f_{k_r,l}}$ is a crystalline and potentially diagonalizable lift of $\overline{\rho}_{f,l}$. When $f$ is $l$-ordinary, we base our proof on the theory of Hida families, while in the non-ordinary case, we adapt a local-to-global argument due to Khare and Wintenberger in the setting of their proof of Serre's modularity conjecture, together with a result on existence of lifts with prescribed local conditions over CM fields, a flatness result due to B\"ockle and  a local dimension result by Kisin. We discuss the motivation and tentative future applications of our result in ongoing research on the automorphy of $\mathrm{GL}_{2n}$-type representations in the higher level case.
\end{abstract}

In \cite{gee}, the authors introduce potentially diagonalizable representations, a tool which allows to prove automorphy in a wide range of cases. Since then, the generality and versatility of the notion has brought new hopes in the study of Langlands functoriality. More precisely, we are referring to the part of Langlands functoriality conjecturing the preservation of automorphy under standard group-theoretical operations such as restriction, extension, symmetric powers or tensor products. For instance, in \cite{luissara}, the second author has studied $\mathrm{GL}_{2n}$-type representations of the form $\pi_f\otimes\pi_{\phi}$ with $f$ modular of level $1$ and $\phi$ automorphic and self-dual, and the study of similar results for higher level in the $\mathrm{GL}_2$ factor, at least in the odd case, is an ongoing work. 

A promising strategy towards this end is to produce \emph{safe} chains of congruences along a family of modular representations of varying Serre weights and level until we fall in a \emph{tractable} case, for which we know that automorphy is preserved after tensoring. Such a tractable case occurs, for instance, when the final modular form is CM. This approach, together with the fact that potential diagonalizability is preserved under tensor product is a motivation for the present work. In fact, the main result in this paper, which produces congruences of modular forms of arbitrarily large weight which are known to be potentially diagonalizable locally at $p$, is a very helpful tool for attacking Langlands functoriality via the method of \emph{safe} chains of congruences: the novelty in our result with respect to classical ones is that we can preserve the potentially diagonalizable character at $p$, which is key to allow the automorphy lifting theorems in \cite{gee} to be applied through the process of proving functoriality. And by the way, this is what we mean when we call the chain of congruences \emph{safe}: that at each congruence in the chain appropriate conditions hold so that it is amenable for some automorphy lifting theorem to be applied.

Let $f\in S_k(\Gamma_0(N),\chi)$ be a Hecke-eigenform for each Hecke operator $T_p$ with $p\nmid N$, with field of coefficients $K_f$, and let $l>k$ be a prime with $l\nmid N$. Denote by $\mathcal{O}_f$ the ring of integers of $K_f$. Further on, in our argument we will need to assume that $l>6$, leaving the remaining cases for a further study. The reason is that we use in a crucial manner a general automorphy lift result in \cite{gee} which we recall as Theorem \ref{geetool}.

Fix a prime $\lambda$ of $\mathcal{O}_f$ above $l$ and denote by $\mathcal{O}_{f,\lambda}$ the $\lambda$-adic completion of $\mathcal{O}_f$. Let $\rho_{f,l}:G_{\mathbb{Q}}\to\mathrm{GL}_2(\mathcal{O}_{f,\lambda})$ the $l$-adic representation attached to $f$ by Deligne and let $\overline{\rho}_{f,l}:G_{\mathbb{Q}}\to\mathrm{GL}_2(\mathbb{F})$ be the reduction of $\rho_{f,l}$ modulo $\lambda$, where $\mathbb{F}$ is the residual field of $\lambda$. Recall that $\det(\rho_{f,l})=\chi\epsilon_l^{k-1}$, where $\epsilon_l:G_{\mathbb{Q}}\to\mathcal{O}_{f,\lambda}^*$ is the $l$-adic cyclotomic character, which again, we recall, $\epsilon_l(\phi_p)=p$ for $p\nmid Nl$, where $\phi_p$ stands for any Frobenius element at $p$.

For technical reasons, we need to assume that $\mathrm{SL}_2(\mathbb{F}_l)\subseteq\mathrm{Im}(\overline{\rho}_{f,l})$. It is known after Momose (\cite{momose}) that this is the case for all but finitely many $l$ if $f$ is not CM, which is a natural assumption to make, and which we will also make. In this case, it also follows that $\overline{\rho}_{f,l}$ is absolutely irreducible, hence in particular, simple, which avoids us working with semisimplifications when dealing with the residual representation at several points in our study. Let $k_0$ be the Serre weight of $\overline{\rho}_{f,l}$. Since $l\nmid N$ and $2\leq k\leq l-1$, it follows that $k=k_0$.

The aim of the present work is to prove the following

\begin{thm}Let $f\in S_k(\Gamma_0(N),\chi)$, $K_f$, $l$ and $\lambda$ be as in the above paragraph. Then, there exists an infinite sequence $\{k_r\}_{r\geq 1}$, and for each $k_r$, a cusp form $f_{k_r}\in S_{k_r}(\Gamma_0(N),\chi)$  such that $\rho_{f_{k_r,l}}$ is a lift of $\overline{\rho}_{f,l}$ and such that $\rho_{f_{k_r,l}}|_{G_{\mathbb{Q}_l}}$ is 
\begin{itemize}
\item[1.] crystalline of Hodge-Tate weight $\{0,k_r-1\}$,
\item[2.] potentially diagonalizable, in the sense of \cite{gee} (we recall the precise definition in the first section).
\end{itemize}
\end{thm}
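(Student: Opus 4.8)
The plan is to split the argument according to whether $f$ is $l$-ordinary or non-ordinary at $l$, as the two cases need genuinely different techniques. If $a_l(f)$ is a $\lambda$-adic unit, then the $l$-distinguishedness of $\overline{\rho}_{f,l}$ (which holds since $2\le k\le l-1$) lets us put $f$ in a Hida family $\mathbf{f}$ of tame level $N$ and tame nebentypus $\chi$ with residual representation $\overline{\rho}_{f,l}$. Among the classical specializations, the weights $k_r\ge 2$ with $k_r\equiv k\pmod{l-1}$ produce infinitely many classical $l$-ordinary newforms $f_{k_r}\in S_{k_r}(\Gamma_0(N),\chi)$ with $\rho_{f_{k_r},l}$ a lift of $\overline{\rho}_{f,l}$. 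Since $l\nmid N$, each $\rho_{f_{k_r},l}|_{G_{\mathbb{Q}_l}}$ is crystalline of Hodge--Tate weights $\{0,k_r-1\}$, and being ordinary it is potentially diagonalizable by the standard fact, recalled from \cite{gee}, that an ordinary crystalline representation with regular Hodge--Tate weights is potentially diagonalizable. This disposes of the ordinary case without any deformation theory.

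In the non-ordinary case $\overline{\rho}_{f,l}|_{G_{\mathbb{Q}_l}}$ is irreducible and, being tamely ramified, of the form $\mathrm{Ind}_{G_{\mathbb{Q}_{l^2}}}^{G_{\mathbb{Q}_l}}\overline{\psi}$ for a tame character $\overline{\psi}$ of $G_{\mathbb{Q}_{l^2}}$. For each large $m$ I would lift $\overline{\psi}$ to a crystalline character $\psi_m$ of $G_{\mathbb{Q}_{l^2}}$ with Hodge--Tate weights $\{0,k_r-1\}$, where $k_r:=1+c+m(l^2-1)$ and $c\in\{0,\dots,l^2-2\}$ is determined by $\overline{\psi}|_{I}$; a short fundamental-character computation gives $c\equiv k-1\pmod{l-1}$, so $k_r\equiv k\pmod{l-1}$ as required for the global determinant. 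Then $\rho_{l,r}:=\mathrm{Ind}_{G_{\mathbb{Q}_{l^2}}}^{G_{\mathbb{Q}_l}}\psi_m$ is a crystalline lift of $\overline{\rho}_{f,l}|_{G_{\mathbb{Q}_l}}$ of Hodge--Tate weights $\{0,k_r-1\}$, and it is potentially diagonalizable because $\rho_{l,r}|_{G_{\mathbb{Q}_{l^2}}}=\psi_m\oplus\psi_m^{\sigma}$ is a direct sum of crystalline characters. Hence for each such $k_r$ there is an irreducible component $\mathfrak{C}_r$ of Kisin's crystalline framed (fixed-determinant) deformation ring of $\overline{\rho}_{f,l}|_{G_{\mathbb{Q}_l}}$ with Hodge--Tate weights $\{0,k_r-1\}$ all of whose $\overline{\mathbb{Q}}_l$-points are potentially diagonalizable, and by Kisin's local dimension theorem $\mathfrak{C}_r$ has the expected dimension.

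I would then run a Khare--Wintenberger style local-to-global argument for the deformation problem of $\overline{\rho}_{f,l}$ with fixed determinant $\chi\epsilon_l^{k_r-1}$, imposing the component $\mathfrak{C}_r$ at $l$, the local condition cut out by $\rho_{f,l}|_{G_{\mathbb{Q}_p}}$ (same inertial type and conductor) at each $p\mid N$, and unramifiedness elsewhere. Combining B\"ockle's presentation of the global deformation ring over the local lifting rings with the local dimension estimate gives a lower bound on its Krull dimension, and B\"ockle's flatness result then forces a $\overline{\mathbb{Q}}_l$-point --- a continuous lift $\rho\colon G_{\mathbb{Q}}\to\mathrm{GL}_2(\overline{\mathbb{Z}}_l)$ of $\overline{\rho}_{f,l}$ which is crystalline of Hodge--Tate weights $\{0,k_r-1\}$, potentially diagonalizable at $l$, and has the same prime-to-$l$ conductor as $\rho_{f,l}$. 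As in Khare--Wintenberger's treatment of Serre's conjecture, producing this point is expected to require first passing to an auxiliary CM field in which $l$ splits, applying there the cited result on existence of lifts with prescribed local conditions over CM fields, and descending; the CM field must be chosen so solvable base change and descent return a lift over $\mathbb{Q}$ of exactly the prescribed type. Finally $\rho$ is automorphic: $\overline{\rho}=\overline{\rho}_{f,l}$ is modular, $\rho_{f,l}|_{G_{\mathbb{Q}_l}}$ is crystalline in the Fontaine--Laffaille range (as $k\le l-1$) hence potentially diagonalizable, $\rho|_{G_{\mathbb{Q}_l}}$ is potentially diagonalizable by construction, $\overline{\rho}_{f,l}$ is odd with $\mathrm{SL}_2(\mathbb{F}_l)\subseteq\mathrm{Im}(\overline{\rho}_{f,l})$, and $l>6$; so the automorphy lifting theorem \ref{geetool} of \cite{gee} applies and yields $\rho\cong\rho_{g,l}$ for a cuspidal newform $g$. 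The determinant and the prime-to-$l$ conductor of $\rho$ then place $g$ in $S_{k_r}(\Gamma_0(N),\chi)$, and letting $r$ range over the infinitely many weights above completes the proof.

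I expect the crux to be the local-to-global step in the non-ordinary case: one must produce the global lift \emph{over $\mathbb{Q}$} with prime-to-$l$ conductor exactly matching that of $\rho_{f,l}$ and nebentypus exactly $\chi$, which demands a careful choice of the auxiliary CM field and of the local conditions at the primes dividing $N$, and a delicate descent. A secondary, more computational, difficulty is the fundamental-character bookkeeping needed to realize the potentially diagonalizable crystalline condition at $l$ simultaneously for infinitely many weights in the residue class $k\bmod(l-1)$.
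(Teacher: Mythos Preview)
Your proposal is correct and follows essentially the same route as the paper: Hida families for the ordinary case with weights $k_r\equiv k\pmod{l-1}$, and in the non-ordinary case an explicit potentially diagonalizable crystalline local lift at $l$ (the paper phrases this via \cite{blgg} Lemmas 4.1.15 and 4.1.19 rather than as an induction from $\mathbb{Q}_{l^2}$, and takes $k_r\equiv k\pmod{l^2-1}$), followed by a Khare--Wintenberger presentation to get $\dim\mathcal{R}_{k_r,\mathrm{global}}\ge 1$, passage to an auxiliary imaginary quadratic field where \cite{gee} Theorem 4.4.1 yields finite generation of the deformation ring, descent of that finiteness to $\mathbb{Q}$, B\"ockle's lemma to produce the $\overline{\mathbb{Z}}_l$-point, and finally modularity via \cite{gee} Theorem 4.2.1 over the CM field plus solvable base change. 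The only point where your sketch is slightly out of order is the last step: Theorem~\ref{geetool} is the CM existence-of-lifts result used to obtain finite generation (its item (7)), not the automorphy lifting theorem you invoke for the $G_{\mathbb{Q}}$-point; the paper instead deduces modularity of the global point by restricting to $G_F$, applying \cite{gee} 4.2.1 there, and descending by cyclic base change.
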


Both in the ordinary and in the non-ordinary case, the representation $\rho_{f,l}|_{G_{\mathbb{Q}_l}}$ is potentially diagonalizable, due to \cite{gee} Lemma 1.4.3, and this is the starting point for our proof. In the ordinary case, we use Hida families  and examine the representations attached to the specializations. These are crystalline and ordinary, hence potentially diagonalizable.

The proof in the non-ordinary case is significantly more involved and uses, first, an infinite family of local lifts (via \cite{blgg} Lemmas 4.1.15 and 4.1.19) of the restriction of $\overline{\rho}_{f,l}$ to $I_l$, the inertia group at $l$. Then, we mimick an argument due to Khare and Wintenberger (\cite{kwannals}): from the universal deformation rings attached to suitable deformation conditions, we construct a global deformation ring $\mathcal{R}_{k_r,global}$ and prove that its dimension is greater than or equal to $1$ and that it is finitely generated as a module over a suitable ring of integers. Hence, by an argument due to B\"ockle in \cite{bockle}, $\mathrm{Spec}(\mathcal{R}_{k_r,global})$ contains $\overline{\mathbb{Q}}_l$-points which, moreover, we prove to be modular.

\section{Definitions, notations and preliminary facts}

Throughout this paper, $K$ will denote a finite extension of $\mathbb{Q}_l$ and $\mathcal{O}$ its ring of integers. Fix algebraic closures $\overline{\mathbb{Q}}$ of $\mathbb{Q}$, $\overline{K}$ of $K$ and $\overline{\mathbb{Q}}_l$ of $\mathbb{Q}_l$ respectively, fix an embedding of $\overline{\mathbb{Q}}$ in $\overline{\mathbb{Q}}_l$ and denote as usual $G_K=\mathrm{Gal}(\overline{K}/K)$, the absolute Galois group of $K$. Given a local ring $R$ of residual characteristic $l$, denote by $\mathfrak{m}_R$ its unique maximal ideal and by $\mathbb{F}$ its residual field. Denote by $\mathrm{GL}_m(R)_1$ the kernel of the reduction map $\mathrm{GL}_m(R)\to \mathrm{GL}_m(\mathbb{F})$. For a representarion $\rho:\Gamma\to \mathrm{GL}_m(R)$ and for $g\in \mathrm{GL}_m(R)$, we will denote $\mathrm(int)(g)\rho:=g\rho g^{-1}$, the conjugation of $\rho$ by $g$. If $R=\mathcal{O}_{\overline{\mathbb{Q}}_l}$, denote by $\overline{\rho}$ its residual representation obtained by reducing modulo $\mathfrak{m}_{\overline{\mathbb{Q}}_l}$. Finally, denote by $\det(\rho)$ and $\det(\overline{\rho})$ the compositions of $\rho$ and $\overline{\rho}$ with the determinant map.

Let $\Gamma=G_K$. Denote by $\rho^{\square}_{\mathcal{O}}:\Gamma\to \mathrm{GL}_n(R^{\square}_{\mathcal{O},\overline{\rho}})$ the framed universal deformation of $\overline{\rho}$ to the framed universal deformation ring $R^{\square}_{\mathcal{O},\overline{\rho}}$. Framed deformations are just lifts of the residual representation, with no other equivalence than equality (a basis is fixed for the residual representation, compatibly lifted in each deformation). For each finite set $S$ of rational places (possibly containing the archimedean one), denote by $R_{\mathcal{O},S}^{\square}$ the complete noetherian local $\mathcal{O}$-algebra which represents the functor which sends $A$ to the set of isomorphism classes of pairs $(\rho_{f,A}, \{\beta_{A,\nu}\}_{\nu\in S})$, with $\rho_{f,A}$ framed deformation of $\overline{\rho}_{f,l}$ to $A$ (complete noetherian local $\mathcal{O}$-algebra with residue field $\mathbb{F}$) unramified outside $S$, and for each $\nu\in S$, $\beta_{A,\nu}$ is a lift of the chosen basis of $\overline{\rho}_{f,l}$. Setting $\mu=\chi\epsilon_l^{k-1}=\det(\rho_{f,l})$, we have universal objects $R^{\square, \mu}_{\mathcal{O},\overline{\rho}}$ and $R_{\mathcal{O},S}^{\square, \mu}$ parametrizing framed deformations of $\overline{\rho}$ with fixed determinant $\mu$. To make \emph{having fixed determinant $\mu$} a proper deformation condition in the sense of Mazur, we slightly abuse notation when saying that $\overline{\rho}_{f,l}$ satisfies the condition by identifying $\epsilon_l$ with its composition with reduction modulo $l$.

Denote by $\rho_{\mathcal{O}}:\Gamma\to \mathrm{GL}_n(R_{\mathcal{O},\overline{\rho}})$ the  universal deformation of $\overline{\rho}$ to the universal deformation ring $R_{\mathcal{O},\overline{\rho}}$ (whenever the deformation functor is representable) and the fixed-determinant (uni)versal object $R_{\mathcal{O}, \overline{\rho}}^{\mu}$ for $\mu=\psi\epsilon_l^n$, with $\psi$ of finite order and $n\in\mathbb{Z}$. We recall the following definition:

\begin{defn}[\cite{gee}, p. 25] The ring $R^{\square}_{\mathcal{O},\overline{\rho},\{H_{\tau}\}, F-cris}$ is defined as the unique reduced and $l$-torsion free quotient of $R^{\square}_{\mathcal{O},\overline{\rho}}$  such that a point $\xi: R^{\square}_{\mathcal{O},\overline{\rho}}\to\overline{\mathbb{Q}}_l$ factors through $R^{\square}_{\mathcal{O},\overline{\rho},\{H_{\tau}\},\mathcal{P}}$ if and only if it corresponds to a representation $\rho:G_K\to \mathrm{GL}_n(\overline{\mathbb{Q}}_l)$ which is de Rham with Hodge-Tate numbers $\{H_{\tau}\}$ for all $\tau:K\hookrightarrow\overline{\mathbb{Q}_l}$ and which is crystalline after restriction to $F$. Observe that we are imposing no condition on the determinant here.
\end{defn}

The existence of $R^{\square}_{\mathcal{O},\overline{\rho},\{H_{\tau}\},F-cris}$ follows from Theorem 2.7.6. of \cite{kisin}: indeed, given $R^{\square}_{\mathcal{O},\overline{\rho}}$,  Corollary 2.6.2 of loc. cit. yields a quotient $R^{\square}_{\mathcal{O},\overline{\rho}, \{H_{\tau}\}}$, corresponding to potentially semistable representations of Hodge-Tate type $\{H_{\tau}\}$. Then,  Corollary 2.7.7 of loc. cit., yields $R^{\square}_{\mathcal{O},\overline{\rho},\{H_{\tau}\},F-cris}$ as the quotient of $R^{\square}_{\mathcal{O},\overline{\rho}, \{H_{\tau}\}}$ defined by the equation $N=0$, where $N$ is as in Theorem 2.5.5 (2) of loc. cit.

\begin{defn}[Barnet-Lamb, Gee, Gerarghty, Taylor, \cite{gee}, pag. 26] For any two representations $\rho_1,\rho_2: G_K\to \mathrm{GL}_n(\mathcal{O}_{\overline{\mathbb{Q}_l}})$, we say that $\rho_1$ connects to $\rho_2$ if
\begin{itemize}
\item $\overline{\rho_1}$ and $\overline{\rho_2}$ are equivalent,
\item $\rho_1$ and $\rho_2$ are potentially crystalline,
\item for each continuous field embedding $\tau: K\hookrightarrow\overline{\mathbb{Q}_l}$,  $HT_{\tau}(\rho_1)=HT_{\tau}(\rho_2)$,
\item and $\rho_1$ and $\rho_2$ define points on the same irreducible component of the scheme $Spec(\mathcal{R}^{\square}_{\overline{\rho}_1,\{HT_{\tau}(\rho_1)\},F-cris}\otimes\overline{\mathbb{Q}_l})$ for some (and hence all) sufficiently large $F$.
\end{itemize}
\end{defn}
It is convenient to observe that \emph{connects} is an equivalence relation and the fact that $\rho_1$ connects with $\rho_2$ does not depend only on the $\mathrm{GL}_n(\mathcal{O}_{\overline{\mathbb{Q}}_l})$-conjugacy class of $\rho_1$ or $\rho_2$; it depends on $\rho_1$ and $\rho_2$ themselves, and on their common connected component (cf. \cite{gee} Lemma 1.2.2).

\begin{defn}A Galois representation $\rho:G_K\to \mathrm{GL}_n(\mathcal{O}_{\overline{\mathbb{Q}_l}})$ is said to be diagonalizable if it is crystalline and connects to some representation $\chi_1\oplus...\oplus\chi_n$, with $\chi_i:G_K\to \mathcal{O}_{\overline{\mathbb{Q}_l}}^*$ crystalline characters, and it is said to be potentially diagonalizable if there exists a finite extension $K'/K$ such that $\rho|_{G_{K'}}$ is diagonalizable.
\end{defn}

In several arguments in Section 3, we will crucially use the following potential diagonalizability criteria :

\begin{thm}[\cite{gee}, Lemma 1.4.3] Let $\rho:G_K\to \mathrm{GL}_n(\overline{\mathbb{Q}}_l)$ be a potentially crystalline representation (i.e. crystalline after restriction to $G_F$ for some finite extension $F/K$).
\begin{itemize}
\item[a)] If $\rho$ has a $G_K$-invariant filtration with one-dimensional graded pieces, then $\rho$ is potentially diagonalizable. In particular, potentially crystalline ordinary representations are potentially diagonalizable.
\item[b)] If $K/\mathbb{Q}_l$ is unramified, $\rho$ is crystalline and all the Hodge-Tate weights are in the range $\{0,...,l-2\}$ (i.e. the Fontaine-Lafaille range), then $\rho$ is potentially diagonalizable.
\end{itemize}
\label{thmbase}
\end{thm}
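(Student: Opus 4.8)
\emph{Proof strategy.} The plan is to reduce both assertions, after replacing $K$ by a suitable finite extension (harmless, since potential diagonalizability is insensitive to finite base change), to statements about irreducible components of crystalline framed deformation schemes. I use repeatedly that subquotients of crystalline representations are crystalline, together with the following elementary observation: restriction of deformations along an inclusion $G_{K'}\hookrightarrow G_K$ induces a morphism between the relevant crystalline framed deformation schemes which carries the common irreducible component of two \emph{connected} representations $\rho_1,\rho_2$ onto an irreducible set passing through both $\rho_1|_{G_{K'}}$ and $\rho_2|_{G_{K'}}$; hence, by transitivity of the connectedness relation, \emph{if $\rho_1$ connects to a potentially diagonalizable representation, then $\rho_1$ is itself potentially diagonalizable.}

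\emph{Part (a).} First pass to a finite extension $K'/K$ over which $\rho$ becomes crystalline and which contains the fixed field of $\ker\overline\rho$; replacing $K$ by $K'$, we have $\overline\rho=\mathrm{Id}$, and, in a basis adapted to the given $G_K$-stable flag, $\rho$ is upper triangular with diagonal entries $\chi_1,\dots,\chi_n$ --- crystalline characters reducing to the trivial character. Consider the family $\rho_t:=D_t\,\rho\,D_t^{-1}$ with $D_t=\mathrm{diag}(1,t^{-1},\dots,t^{-(n-1)})$: the $(i,j)$ entry of $\rho_t$ is $t^{\,j-i}$ times that of $\rho$, a polynomial in $t$, so for every $t\in\mathcal{O}_{\overline{\mathbb{Q}}_l}$ the specialization $\rho_t$ is a genuine crystalline lift of $\overline\rho$ carrying the Hodge--Tate weights of $\rho$ --- isomorphic to $\rho$ over $\overline{\mathbb{Q}}_l$ when $t\neq0$, and equal to $\chi_1\oplus\cdots\oplus\chi_n$ at $t=0$. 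Thus $t\mapsto\rho_t$ traces out an irreducible subset of $\mathrm{Spec}\bigl(R^{\square}_{\mathcal{O},\overline\rho,\{HT_\tau(\rho)\},\mathrm{cris}}\otimes\overline{\mathbb{Q}}_l\bigr)$ containing both $\rho$ and $\chi_1\oplus\cdots\oplus\chi_n$; so these lie on a common irreducible component, i.e.\ $\rho$ connects to $\chi_1\oplus\cdots\oplus\chi_n$, and since $\rho$ is crystalline it is diagonalizable. Hence the original $\rho$ is potentially diagonalizable; the ``in particular'' is the special case in which the $\chi_i$ are unramified twists of powers of $\epsilon_l$.

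\emph{Part (b).} Here I invoke Fontaine--Laffaille theory. First pass to a finite \emph{unramified} extension $K_1/K$ --- small enough to stay within the Fontaine--Laffaille range --- over which $\overline\rho|_{G_{K_1}}$ acquires a complete $G_{K_1}$-stable flag: this is possible because inertia acts, modulo its (unipotent) wild part, through characters, and after a finite unramified twist Frobenius too can be made to respect a full flag. Replacing $K$ by $K_1$, Fontaine--Laffaille theory shows that this flag on $\overline\rho$ lifts, within the Fontaine--Laffaille category, to a complete $G_K$-stable flag over $\mathcal{O}$ with crystalline characters of the Hodge--Tate weights dictated by $\rho$ as graded pieces; its generic fibre is a crystalline lift $\rho^{\mathrm{ord}}$ of $\overline\rho$ which is ordinary, hence potentially diagonalizable by part (a). On the other hand, in the Fontaine--Laffaille range over unramified $K$ the crystalline quotient $R^{\square}_{\mathcal{O},\overline\rho,\{H_\tau\},\mathrm{cris}}$ is formally smooth over $\mathcal{O}$, so the $\mathrm{Spec}$ of its generic fibre is irreducible, whence $\rho$ connects to $\rho^{\mathrm{ord}}$. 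By the observation of the first paragraph $\rho$ is potentially diagonalizable, and un-replacing $K$ concludes.

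\emph{Main obstacle.} I expect the genuinely delicate points to sit in part (b): arranging that $\overline\rho$ becomes residually trianguline over a \emph{finite unramified} extension without leaving the Fontaine--Laffaille range, and correctly marshalling the Fontaine--Laffaille inputs --- smoothness of the crystalline deformation ring in this range (which forces a unique component) together with the compatible lifting of a mod-$l$ flag to a crystalline flag of prescribed Hodge--Tate type. In part (a) the only subtlety is that the contracting family $\rho_t$ must land in the deformation scheme of $\overline\rho$ itself and not of its semisimplification; this is precisely what the preliminary reduction to ``$\overline\rho=\mathrm{Id}$'' secures.
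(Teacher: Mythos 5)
The paper itself offers no proof of this statement: it is imported verbatim from \cite{gee} (Lemma 1.4.3), so the only meaningful comparison is with the original argument there, which your sketch in fact tracks quite closely --- part (a) by the contraction trick (conjugation by $\mathrm{diag}(1,t,\dots,t^{n-1})$ after trivializing $\overline{\rho}$ over a finite extension), part (b) by formal smoothness of the Fontaine--Laffaille crystalline lifting ring (hence a single irreducible component) together with the production of one triangularizable point after a finite unramified base change, plus the fact that connecting to a potentially diagonalizable representation and restricting both preserve potential diagonalizability (this last point is \cite{gee} Lemma 1.2.2--type bookkeeping and is fine). However, as a proof your text has two genuine gaps. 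In part (a), the family $t\mapsto D_t\rho D_t^{-1}$ has entries polynomial in $t$ and so defines a lift over $\mathcal{O}_E[[t]]$ (for $E/\mathbb{Q}_l$ finite containing the matrix entries), hence a map from $\mathrm{Spf}\,\mathcal{O}_E[[t]]$ into the lifting space which, by Kisin's characterization, factors through the crystalline quotient of the given Hodge--Tate type; but the specializations this produces are exactly those with $t$ in the maximal ideal, namely $\chi_1\oplus\cdots\oplus\chi_n$ at $t=0$ and the conjugates $D_{t_0}\rho D_{t_0}^{-1}$ with $|t_0|<1$. The representation $\rho$ itself is the point ``$t=1$'', which does not lie in this formal family, and --- as the paper stresses right after the definition --- ``connects'' is not a function of the $\overline{\mathbb{Q}}_l$-conjugacy class alone, so ``isomorphic to $\rho$ when $t\neq 0$'' does not by itself place $\rho$ on that component. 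You still need to argue that $\rho$ connects to $D_{t_0}\rho D_{t_0}^{-1}$ (for instance by working over the closed disc and justifying that such a family maps into the generic fibre, or by invoking the conjugation statements of \cite{gee} Section 1.2); this is precisely the step your sketch passes over.

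In part (b), the sentence ``Fontaine--Laffaille theory shows that this flag lifts, within the Fontaine--Laffaille category, to a complete $G_K$-stable flag with crystalline characters of the Hodge--Tate weights dictated by $\rho$'' is the real mathematical content and is only asserted. Making it precise requires: strictness of morphisms in the Fontaine--Laffaille category (so that the weights of the rank-one graded pieces of $\overline{M}$ are well defined and add up to the Hodge--Tate type of $\rho$), lifts of those rank-one pieces to crystalline characters with prescribed reduction and prescribed labelled weights, and a lifting-of-extensions argument (vanishing of the relevant $\mathrm{Ext}^2$ in the Fontaine--Laffaille category --- the same phenomenon underlying the smoothness you invoke). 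For $n=2$ this is essentially the content of \cite{blgg} Lemmas 4.1.15 and 4.1.19, which the present paper itself uses in Lemma \ref{firstlift}; for general $n$ it must either be proved along these lines or cited. Two smaller points: the triangularizability of $\overline{\rho}$ after a finite \emph{unramified} base change is cleanest via the classification of irreducible mod $l$ representations of $G_K$ as inductions of characters from unramified extensions (your appeal to inertia acting through characters ``modulo the wild part'' is not quite an argument), and the worry about ``staying within the Fontaine--Laffaille range'' is vacuous, since unramified base change does not alter the Hodge--Tate weights.
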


In the rest of the article, it will be $n=2$ unless we state the contrary.

\section{The ordinary case} Assume $f$ is $l$-ordinary so that, as is well known, $\rho_{f,l}$ is ordinary in the sense of Theorem \ref{thmbase} a). A simple strategy to prove our main theorem in this case is by using Hida families. Next, we introduce the precise definitions and relevant facts, enough for our purposes. First of all, we need to recall the concept of ordinary $l$-stabilisation:

\begin{defn}Let $f\in S_k(\Gamma_1(N),\chi)$ be an ordinary $T_l$-eigenform and $a_l(f)$ its eigenvalue. Consider the Hecke characteristic polynomial $X^2-a_l(f)X+\chi(l)l^{k-1}$ and label its roots $\alpha$ and $\beta$ so that $\mathrm{ord}_l(\alpha)=0$ and $\mathrm{ord}_l(\beta)=k-1$. The $l$-ordinary stabilisation of $f$ is the modular form whose $q$-expansion is given by
$$
f^{(l)}(q)=f(q)-\beta f(q^l).
$$
This is a modular form in the space $S_k(\Gamma_1(N)\cap\Gamma_0(l))$.
\end{defn}

Set $\Gamma = 1 + lN\mathbb{Z}_l$ and let $\Lambda = \mathcal{O}[[\Gamma]]$ be the completed group ring of $\Gamma$. The weight space is defined to be
$$
\Omega:= \mathrm{Hom}_{\mathcal{O}-alg}(\Lambda, \mathcal{O})\cong \mathrm{Hom}_{cts}(\Gamma, \mathcal{O}^*).
$$
The subset of classical characters of $\Omega$ is
$$
\Omega^{cl}=\{\chi_k := (\gamma\mapsto\gamma^k),\mbox{ with  }k\in\mathbb{Z}_{\geq 2} \}.
$$
Given any finite flat extension $\Lambda'$ of $\Lambda$, denote $\Omega'= \mathrm{Hom}(\Lambda',\mathcal{O})$. This space is endowed with a natural projection $\kappa:\Omega'\to\Omega$, induced by the inclusion $\Lambda\hookrightarrow\Lambda'$.

\begin{defn}[Darmon-Rotger, \cite{dr} p. 22] A Hida family of tame level $N$ is a quadruple $\bold{f}=(\Lambda_{\bold{f}},\Omega_{\bold{f}},\Omega_{\bold{f}}^{cl},\bold{f}(q))$ where
\begin{itemize}
\item[a)] $\Lambda_{\bold{f}}$ is a finite and flat extension of $\Lambda$.
\item[b)] $\Omega_{\bold{f}}$ is a non-empty open subset of $X_{\bold{f}} := \mathrm{Hom}(\Lambda_{\bold{f}} , \mathbb{C}_p)$ and $\Omega_{\bold{f}}^{cl}$ is an $l$-adically dense
subset of $\Omega_{\bold{f}}$ whose image under $\kappa$ lies in $\Omega^{cl}$,
\item[c)] $\displaystyle \bold{f}(q)= \sum_{n\geq 1}\bold{a}_nq^n\in\Lambda_{\bold{f}} [[q]]$ is a formal $q$-series such that, for all $x\in\Omega_{\bold{f}}^{cl}$, the power series
$$
f^{(l)}_x:= \sum_{n=1}^{\infty}a_n(x)q^n
$$
is the $q$-expansion of the ordinary $l$-stabilization of a normalised newform (denoted $f_x$) of weight $\kappa(x)$ on $\Gamma_1(N)$.
\end{itemize}
\label{defnHida}
\end{defn}

With this given definition, one has the following result:

\begin{thm}[\cite{dr} p. 23] Let $f$ be an ordinary newform in $S_k(N, K_f )$.  There exists a Hida family $(\Lambda_{\bold{f}},\Omega_{\bold{f}},\Omega_{\bold{f}}^{cl},\bold{f}(q))$  of tame level $N$ and a classical point $x_k\in\Omega_{\bold{f}}^{cl}$ such that $\kappa(x_k)=k$  and $f_{x_k}= f$.
\label{hidapaso}
\end{thm}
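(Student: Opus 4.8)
The plan is to reconstruct Hida's construction of a $\Lambda$-adic family of ordinary eigenforms passing through $f$ and to invoke his control theorem. First I would replace $f$ by its ordinary $l$-stabilization $f^{(l)}\in S_k(\Gamma_1(N)\cap\Gamma_0(l))$; ordinarity of $f$ means precisely that the $U_l$-eigenvalue $\alpha$ of $f^{(l)}$ is an $l$-adic unit. The system of Hecke eigenvalues of $f^{(l)}$ then defines an $\mathcal{O}$-algebra homomorphism $\phi_f\colon\mathbf{h}^{\mathrm{ord}}(N;\Lambda)\to\mathcal{O}'$, where $\mathbf{h}^{\mathrm{ord}}(N;\Lambda)$ is Hida's big ordinary Hecke algebra of tame level $N$ acting on the module of ordinary $\Lambda$-adic cusp forms, $\mathcal{O}'$ is the ring of integers of a finite extension of $K_f$, and $\phi_f$ reduces, after specializing the weight variable to $\chi_k$, to the eigensystem of $f^{(l)}$.

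Next I would quote Hida's two fundamental facts: $\mathbf{h}^{\mathrm{ord}}(N;\Lambda)$ is finite and torsion-free, hence flat, as a $\Lambda$-module, and its formation is ``independent of the weight'' via the control theorem, so that for every arithmetic point of weight $m\ge 2$ the corresponding specialization recovers the classical ordinary Hecke algebra acting on $S_m(\Gamma_1(N)\cap\Gamma_0(l))$. Since $\Lambda$ is a (product of) regular local ring(s) and $\mathbf{h}^{\mathrm{ord}}$ is finite flat over it, $\ker\phi_f$ is contained in a unique minimal prime; let $\mathbb{T}$ be the associated quotient domain and $\Lambda_{\mathbf{f}}$ its integral closure, which is again finite and flat over $\Lambda$. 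Composing $\mathbf{h}^{\mathrm{ord}}(N;\Lambda)\to\mathbb{T}\hookrightarrow\Lambda_{\mathbf{f}}$ with the tautological assignment sending the Hecke operator $T_n$ to its image yields elements $\mathbf{a}_n\in\Lambda_{\mathbf{f}}$, and I set $\mathbf{f}(q)=\sum_{n\ge 1}\mathbf{a}_n q^n\in\Lambda_{\mathbf{f}}[[q]]$.

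It then remains to cut out the base and the classical locus. I would take $\Omega_{\mathbf{f}}^{cl}\subseteq X_{\mathbf{f}}=\mathrm{Hom}(\Lambda_{\mathbf{f}},\mathbb{C}_p)$ to be the set of arithmetic points $x$ lying over $\Omega^{cl}$, i.e. those whose restriction to $\Lambda$ is of the form $\chi_m$ with $m\in\mathbb{Z}_{\ge 2}$ (with at most a fixed finite-order wild part along the branch), and $\Omega_{\mathbf{f}}$ a non-empty open subset of $X_{\mathbf{f}}$ containing $\Omega_{\mathbf{f}}^{cl}$ and avoiding the finitely many degenerate points; since arithmetic points of weight $\ge 2$ are $l$-adically dense in the weight space, $\Omega_{\mathbf{f}}^{cl}$ is dense in $\Omega_{\mathbf{f}}$. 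By the control theorem, for each $x\in\Omega_{\mathbf{f}}^{cl}$ the specialization $\sum_n a_n(x)q^n$ is the $q$-expansion of the ordinary $l$-stabilization of a normalized newform $f_x$ of weight $\kappa(x)$ on $\Gamma_1(N)$; the nebentypus is constant along the branch because the diamond operators act through a fixed finite-order character twisted by the cyclotomic weight variable. Finally, the arithmetic point $x_k$ determined by $\phi_f$ itself has $\kappa(x_k)=k$ and, by construction, the newform attached to it is the one whose ordinary $l$-stabilization is $f^{(l)}$, that is, $f_{x_k}=f$.

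The main obstacle is Hida's control theorem itself: the finiteness and flatness of $\mathbf{h}^{\mathrm{ord}}(N;\Lambda)$ over $\Lambda$ together with the assertion that its weight-$m$ specializations for $m\ge 2$ are exactly the classical ordinary Hecke algebras, so that no spurious non-classical $\Lambda$-adic forms appear and the $l$-adic density of $\Omega_{\mathbf{f}}^{cl}$ is meaningful. In the present self-contained account this input is imported from Hida's work as packaged in \cite{dr}; the remaining steps — the unique-branch argument via flatness over the regular ring $\Lambda$, passage to the normalization, and the bookkeeping of the nebentypus — are then formal.
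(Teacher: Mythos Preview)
The paper does not supply its own proof of this theorem: it is quoted as a known result from \cite{dr}, p.~23, and used as a black box. There is therefore no argument in the paper to compare your outline against. What you have written is a correct sketch of the standard Hida-theoretic construction that underlies the statement in \cite{dr} --- finite flatness of $\mathbf{h}^{\mathrm{ord}}(N;\Lambda)$ over $\Lambda$, the control theorem, passage to a minimal prime and its normalization to obtain $\Lambda_{\mathbf{f}}$, and extraction of the $q$-expansion $\mathbf{f}(q)$ from the images of the Hecke operators --- and you correctly flag the control theorem as the nontrivial imported input.

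One small remark: your claim that ``the nebentypus is constant along the branch'' needs care. As the paper itself records in Proposition~\ref{congruence}, the specializations $f_r$ have nebentypus $\chi\omega^{k-r}$, so the character is only constant on the sub-progression $r\equiv k\pmod{l-1}$; in Definition~\ref{defnHida} the specializations are merely required to lie on $\Gamma_1(N)$, with no fixed character. This does not affect the existence of the classical point $x_k$ with $f_{x_k}=f$, but the parenthetical justification you give should be adjusted.
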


Moreover, one of the features which Hida families carry is that of the congruences between the specializations, up to finitely many coefficients, as we next recall.

\begin{defn}Let $f\in S_k(N,\chi)$ and $g\in  S_r(M,\psi)$, $K$ be a number field which contains both definition fields of $f$ and $g$ and $\frak{p}$ be a prime in $K$. We denote
$$
f\equiv g\pmod{\frak{p}}
$$
if for all but finitely many Fourier coefficients $c(n,f)$ and $c(n,g)$, it holds
$$
c(n,f)\equiv c(n,g)\pmod{\frak{p}}.
$$
\end{defn}

Let us denote by $\omega$ the Teichm\"uller character modulo $l-1$. With the help of Hida families one can prove the following result, which is stated in a slightly different manner in \cite{ghate} Theorem 4:

\begin{prop}Let $f\in  S_k(N,\chi)$ a normalized eigenform for the full Hecke algebra which is a newform (i.e. a primitive cusp form). There are $f_r\in S_r(N, \chi\omega^{k-r})$, with $r\geq 2$, such that $f_r^{(l)}$ is the weight $r$ specialisation of a Hida family $(\Lambda_{\bold{f}},\Omega_{\bold{f}},\Omega_{\bold{f}}^{cl},\bold{f}(q))$ passing by $f$ and such that
\begin{itemize}
\item  $f_k^{(l)}=f^{(l)}$,
\item $f_r^{(l)}$ is a normalized eigenform of level $Nl$ for each $r$
\item $f_r$ is $l$-ordinary for each $r$ (and so is $f_r^{(l)}$),
\item $f_{r_1}^{(l)}\equiv f_{r_2}^{(l)}\pmod{\lambda}$ for each $r_1,r_2\geq 2$.
\end{itemize}
Here $\lambda$ is a prime over $l$ in $K_{r_1,r_2}$, a number field containing both fields of definitions of $f_{r_1}$ and $f_{r_2}$.
\label{congruence}
\end{prop}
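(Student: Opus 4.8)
The plan is to deduce everything from the Hida-theoretic input already assembled, namely Theorem~\ref{hidapaso} and Definition~\ref{defnHida}. First I would apply Theorem~\ref{hidapaso} to the ordinary newform $f\in S_k(\Gamma_0(N),\chi)$, obtaining a Hida family $\mathbf{f}=(\Lambda_{\mathbf{f}},\Omega_{\mathbf{f}},\Omega_{\mathbf{f}}^{cl},\mathbf{f}(q))$ of tame level $N$, with $\mathbf{f}(q)=\sum_{n\geq 1}\mathbf{a}_nq^n\in\Lambda_{\mathbf{f}}[[q]]$, together with a classical point $x_k\in\Omega_{\mathbf{f}}^{cl}$ satisfying $\kappa(x_k)=\chi_k$ and $f_{x_k}=f$. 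I would record that $\Lambda_{\mathbf{f}}$ is a local domain, finite and flat over $\Lambda=\mathcal{O}[[\Gamma]]$, with maximal ideal $\mathfrak{m}_{\mathbf{f}}$ and residue field $\mathbb{F}_{\mathbf{f}}$ finite over $\mathbb{F}$. Since $\Lambda_{\mathbf{f}}$ is torsion free over $\Lambda$, Hida's control theorem guarantees that every integer weight $r\geq 2$ is attained: there is a classical point $x_r\in\Omega_{\mathbf{f}}^{cl}$ with $\kappa(x_r)=\chi_r$ (taking $x_r=x_k$ when $r=k$). I then set $f_r:=f_{x_r}$ and let $f_r^{(l)}:=\sum_{n\geq 1}\mathbf{a}_n(x_r)q^n$ be the $q$-series obtained by specializing $\mathbf{f}(q)$ at $x_r$.

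By part~(c) of Definition~\ref{defnHida}, $f_r^{(l)}$ is the $q$-expansion of the ordinary $l$-stabilization of a normalized newform $f_r$ of weight $r$ on $\Gamma_1(N)$; hence $f_r^{(l)}$ is a normalized eigenform on $\Gamma_1(N)\cap\Gamma_0(l)$, i.e.\ of level $Nl$, and $f_k^{(l)}=f^{(l)}$ by construction. The $l$-ordinarity of each $f_r$ (and therefore of $f_r^{(l)}$) is immediate from the fact that $\mathbf{a}_l$ is a unit in $\Lambda_{\mathbf{f}}$, so its specialization $a_l(f_r)=\mathbf{a}_l(x_r)$ is an $l$-adic unit. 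The one item that requires genuine bookkeeping is the nebentype: writing $\psi_r$ for the nebentype of $f_r$, so that $\det\rho_{f_r,l}=\psi_r\epsilon_l^{r-1}$, I would use that the determinant of the $\Lambda_{\mathbf{f}}$-adic Galois representation attached to $\mathbf{f}$ is a single character of $G_{\mathbb{Q}}$ whose specialization at $x_r$ is $\det\rho_{f_r,l}$; decomposing $\epsilon_l=\omega\langle\epsilon_l\rangle$ into its Teichm\"uller and wild parts and comparing the specializations at $x_r$ and at $x_k$ (where $\psi_k=\chi$ and $\omega^{k-k}=1$) forces $\psi_r=\chi\omega^{k-r}$, whence $f_r\in S_r(\Gamma_0(N),\chi\omega^{k-r})$. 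This is exactly the content of \cite{ghate}, Theorem~4, reorganized in the language of Theorem~\ref{hidapaso}.

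For the congruences I would argue via the locality of $\Lambda_{\mathbf{f}}$. Fix $r_1,r_2\geq 2$, let $K_{r_1,r_2}$ be a number field containing both coefficient fields, and let $\lambda$ be the prime of $K_{r_1,r_2}$ induced by $\mathfrak{m}_{\overline{\mathbb{Q}}_l}$ under the fixed embedding $\overline{\mathbb{Q}}\hookrightarrow\overline{\mathbb{Q}}_l$. Each specialization map $x_{r_i}\colon\Lambda_{\mathbf{f}}\to\overline{\mathbb{Q}}_l$ is a local homomorphism of complete local rings, hence sends $\mathfrak{m}_{\mathbf{f}}$ into $\mathfrak{m}_{\overline{\mathbb{Q}}_l}$ and induces the embedding $\mathbb{F}_{\mathbf{f}}\hookrightarrow\overline{\mathbb{F}}_l$; consequently $\mathbf{a}_n(x_{r_1})$ and $\mathbf{a}_n(x_{r_2})$ both reduce to the common image of $\mathbf{a}_n$ in $\mathbb{F}_{\mathbf{f}}$, so $a_n(f_{r_1}^{(l)})\equiv a_n(f_{r_2}^{(l)})\pmod{\lambda}$ for every $n\geq 1$, and in particular for all but finitely many Fourier coefficients, giving $f_{r_1}^{(l)}\equiv f_{r_2}^{(l)}\pmod{\lambda}$. (Equivalently, one may invoke that the residual Galois representation attached to $\mathbf{f}$ is independent of the classical specialization and apply Chebotarev to the Hecke eigenvalues $a_p$, $p\nmid Nl$.) I do not anticipate a deep obstacle: the only delicate points are the precise nebentype twist $\omega^{k-r}$ and the assertion that all weights $r\geq 2$ occur along the chosen family, both of which are standard outputs of Hida's control theorem, while the congruence itself is forced once one knows $\Lambda_{\mathbf{f}}$ is local.
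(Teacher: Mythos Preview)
Your argument is correct and in fact more transparent than the paper's. Both proofs begin identically, by invoking Theorem~\ref{hidapaso} and Definition~\ref{defnHida} to produce the Hida family and read off the first three bullet points directly from item~(c); the divergence is only in the justification of the congruence $f_{r_1}^{(l)}\equiv f_{r_2}^{(l)}\pmod{\lambda}$. The paper proceeds concretely: it uses that $\Lambda_{\mathbf{f}}$ is free over the local ring $\Lambda$, then applies Abhyankar's theorem on Puiseux expansions to write each module generator $\theta_i$ as a power series in $x^{1/t}$, and finally observes that the weight-$r$ specialization sends $x$ to $\varepsilon(u)u^r-1$, which lies in the maximal ideal of $\mathcal{O}_{\mathbb{C}_l}$ independently of $r$. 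You instead argue abstractly that $\Lambda_{\mathbf{f}}$ is local and that any two classical specializations, being local $\mathcal{O}$-algebra maps to $\overline{\mathbb{Z}}_l$, induce the same map on the residue field $\mathbb{F}_{\mathbf{f}}$; this forces all Fourier coefficients to agree modulo $\lambda$. Your route avoids the somewhat extraneous appeal to Abhyankar and makes the mechanism (locality of $\Lambda_{\mathbf{f}}$) visible, while the paper's version has the virtue of being more explicit about what the specialization maps actually do on power series. One small caveat: Definition~\ref{defnHida} as stated only asks that $\Lambda_{\mathbf{f}}$ be finite flat over $\Lambda$, not that it be local; you should note that one may pass to the local direct factor through which $x_k$ factors (or cite the standard construction of $\Lambda_{\mathbf{f}}$ as a localization of Hida's ordinary Hecke algebra) to secure this.
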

\begin{proof}From Definition \ref{defnHida} and Theorem \ref{hidapaso}, we start by considering a Hida family $(\Lambda_{\bold{f}},\Omega_{\bold{f}},\Omega_{\bold{f}}^{cl},\bold{f}(q))$ passing by $f$, where $\bold{f}(q)=\sum_{n\geq 1}\bold{a}_nq^n\in\ \Lambda_{\bold{f}}[[q]]$ and where $\Lambda_{\bold{f}}$ is a finite and flat extension of $\Lambda$. Since $\Lambda$ is local, it follows that $\Lambda_{\bold{f}}$ is $\Lambda$-free, namely, $\Lambda_{\bold{f}}\cong \Lambda\theta_1\oplus...\Lambda\theta_d$ as modules. On the other hand, since $\Lambda\subset \Lambda_{\bold{f}}$ is a finite ring extension, then it is an integral extension and hence, by a theorem by Abhyankar (\cite{aby} Theorem 3), we can assume that for all $1\leq i\leq d$, the element $\theta_i$ is a power series in $x^{1/t}$ for some $t\geq 1$ with coefficients over $\overline{\mathbb{Q}}_l$. Further, recall that the weight $r$ specialization on $\Lambda_{\bold{f}}$  is a ring homomorphism whose restriction to $\Lambda$ is the classical character $\chi_r$, which is defined via $x\mapsto \varepsilon(u)u^r-1$ for $u$ a topological generator of $\Gamma$ and $\varepsilon$ a finite order character of $\Gamma$ (see Theorem I in \cite{hidagalois}). 

Hence, the $n$-th coefficient $\bold{a}_n$ of $\bold{f}(q)$ has the form $\sum_{i=1}^da_{n,i}\theta_i$ with $a_{n,i}\in\Lambda$ and $\theta_i\in\overline{\mathbb{Q}}_l[[x^{1/t}]]$.  Let us write $f_r^{(l)}=\sum_{n\geq 1}a_n(r)q^n$ for the weight $r$ specialization of $\bold{f}(q)$.  Then, for all $r_1, r_2\geq 2$, we have that $a_n(r_1)\equiv a_n(r_2)\pmod{\mathcal{L}}$ where $\mathcal{L}$ is the maximal ideal in $\mathcal{O}_{\mathbb{C}_l}$, the valuation ring of $\mathbb{C}_l$. But since the $r_1$ and $r_2$ specializations of the Hida family are defined over a common number field, say, $K_{r_1,r_2}$, the congruence is modulo $\mathcal{L}\cap K_{r_1,r_2}$, generated by a prime element $\lambda$.

\end{proof}

Now we can prove our main theorem in the ordinary case.

\begin{thm}For our $l$-ordinary cusp form $f$, there exists an infinite sequence $\{k_r\}_{r\geq 1}\subseteq\mathbb{Z}$, and for each $k_r$, a cusp form $f_{k_r}\in S_{k_r}(N,\chi)$  such that $\rho_{f_{k_r,l}}:G_{\mathbb{Q}}\to\mathrm{GL}_2(\mathcal{O}_{f_{k_r},\lambda})$ is a lift of $\overline{\rho}_{f,l}$ and such that $\rho_{f_{k_r,l}}|_{\mathbb{Q}_l}$ is
\begin{itemize}
\item[1.] crystalline of Hodge-Tate weight $\{0,k_r-1\}$,
\item[2.] potentially diagonalizable.
\end{itemize}
\label{ordcase}
\end{thm}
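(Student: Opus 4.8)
The plan is to combine the congruences of $l$-stabilizations produced in Proposition \ref{congruence} with the standard facts about Galois representations attached to ordinary Hida families. First I would invoke Proposition \ref{congruence} to produce, for every $r\geq 2$, a newform $f_r\in S_r(N,\chi\omega^{k-r})$ whose $l$-stabilization $f_r^{(l)}$ lies in the Hida family through $f$, is $l$-ordinary, and satisfies $f_r^{(l)}\equiv f^{(l)}\pmod{\lambda}$. Taking the weights in the congruence class $k_r\equiv k\pmod{l-1}$ kills the Teichm\"uller twist, so that in fact $f_{k_r}:=f_r$ actually lies in $S_{k_r}(\Gamma_0(N),\chi)$ for the appropriate infinite arithmetic progression $\{k_r\}_{r\geq 1}$; this is exactly the sequence of weights claimed in the theorem, and it is infinite.

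Next I would pass to Galois representations. Since the Fourier coefficients of $f_r^{(l)}$ and $f^{(l)}$ agree modulo $\lambda$ at almost all primes $p\nmid Nl$, and since $\overline{\rho}_{f,l}$ is absolutely irreducible (being large-image by the Momose/$\mathrm{SL}_2(\mathbb{F}_l)$ hypothesis), the Brauer--Nesbitt theorem and Chebotarev force $\overline{\rho}_{f_{k_r},l}\cong\overline{\rho}_{f,l}$, so $\rho_{f_{k_r},l}$ is genuinely a lift of $\overline{\rho}_{f,l}$. Because $f_{k_r}$ is $l$-ordinary and $l\nmid N$, the Deligne--Wiles theory of Galois representations attached to ordinary forms shows that $\rho_{f_{k_r},l}|_{G_{\mathbb{Q}_l}}$ is crystalline with Hodge--Tate weights $\{0,k_r-1\}$ (the form $f_{k_r}$ itself being of level prime to $l$, the $l$-stabilization only changes the local representation at $l$ by a choice of unit root, not its crystallinity) and, crucially, it is ordinary, i.e.\ it admits a $G_{\mathbb{Q}_l}$-stable line on which the action is an unramified-times-cyclotomic-power character. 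Thus it has a $G_{\mathbb{Q}_l}$-invariant filtration with one-dimensional graded pieces.

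Finally I would apply Theorem \ref{thmbase}(a): a potentially crystalline (here genuinely crystalline) representation with a $G_{\mathbb{Q}_l}$-invariant filtration by one-dimensional pieces is potentially diagonalizable. This gives conclusion 2, and conclusion 1 was already established, completing the proof. I do not expect any genuine obstacle here: the ordinary case is soft precisely because ordinarity makes the local-at-$l$ representation manifestly a successive extension of characters, so potential diagonalizability is automatic from \cite{gee} Lemma 1.4.3(a) with no deformation-theoretic input. The only points requiring a little care are (i) checking that the weight progression $k_r\equiv k\pmod{l-1}$ indeed removes the character twist so that the forms live in the required space $S_{k_r}(\Gamma_0(N),\chi)$, and (ii) recording that moving from $f_{k_r}$ to its $l$-stabilization $f_{k_r}^{(l)}$, which is what actually appears in the Hida family, does not affect the isomorphism class of $\rho_{f_{k_r},l}$ away from $l$ nor its crystallinity at $l$ — so that the representation attached to the genuine newform $f_{k_r}$ of level $N$ is the one that is crystalline and ordinary. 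Both are routine.
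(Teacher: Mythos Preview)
Your proposal is correct and follows essentially the same approach as the paper: take the Hida family through $f$, restrict to classical weights $k_r\equiv k\pmod{l-1}$ so that the Teichm\"uller twist disappears and $f_{k_r}\in S_{k_r}(N,\chi)$, use the congruence of $l$-stabilizations from Proposition~\ref{congruence} together with Chebotarev/Brauer--Nesbitt to match the residual representations, and then conclude crystallinity (from $l\nmid N$) and potential diagonalizability via Theorem~\ref{thmbase}(a) applied to the ordinary filtration. Your write-up is in fact slightly more explicit than the paper's on the two routine points (i) and (ii), but the underlying argument is identical.
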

\begin{proof}
Consider the Hida family $(\Lambda_f,\Omega_f,\Omega_f^{cl},\bold{f})$ passing by $f$ in weight $k$ and consider the infinite subset $\Omega_{f,k}^{cl}=\{k_r\equiv k\pmod{l-1}\}$, so that for each $k_r\in \Omega_{f,k}^{cl}$, the specialization $f^{(l)}_{k_r}$ is the ordinary $l$-stabilisation of an ordinary Hecke-eigenform $f_{k_r}\in S_{k_r}(N,\chi)$ of weight $k_r$.

Denote by $\rho_{f_{k_r,l}}$ the $l$-adic representation attached to $f_{k_r}$ by Deligne. Let $K_{f_r,f}$ be the compositum of $K_f$ and $K_{f_r}$ and choose a prime $\lambda$ of $K_{fr,f}$ over $l$. We have $f_{k_r}^{(l)}\equiv f^{(l)}\pmod{\lambda}$ by Prop. \ref{congruence}. From this we conclude that $\overline{\rho}_{f_{k_r,l}}\cong\overline{\rho}_{f}$:

Indeed: first we observe that the $n$-th Fourier coefficient of an ordinary Hecke-eigenform coincides with that of its $l$-stabilisation when $l\nmid n$. Second, the isomorphism class of a semisimple representation of $G_{\mathbb{Q}}$ into $GL_2(\mathbb{F}_l)$ is determined by the traces and determinants of the Frobenius elements $\phi_p$ for $p$ prime outside a finite set: namely, those primes not dividing $Nl$, with $N$ the Artin conductor.

Since for each prime $p\nmid lN$, we have $c(f_{k_r}^{(l)},p)=c(f_{k_r},p)$ and $c(f^{(l)},p)=c(f,p)$ and also for all $p\neq l$ we have $c(f_{k_r}^{(l)},p)\equiv c(f^{(l)},p)\pmod{\lambda}$, it also follows that $\overline{\rho}_{f_{k_r,l}}(\phi_p)=\overline{\rho}_{f}(\phi_p)$, as claimed.

To conclude, since $l\nmid N$, $f_{k_r}$ is crystalline (and ordinary) for each $r\geq 2$, then $\rho_{f_{k_r,l}}$ is potentially diagonalizable.
\end{proof}

\section{The non-ordinary case} Let now $f\in S_k(\Gamma_0(N),\chi)$ be non-ordinary. Since we are assuming that $0\leq k\leq l-1$ and since $\mathbb{Q}_l$ is totally unramified and since $l\nmid N$, $\rho_{f,l}|_{\mathbb{Q}_l}$ is crystalline, hence by Theorem \ref{thmbase} b), $\rho_{f,l}|_{\mathbb{Q}_l}$ is potentially diagonalizable.

If we try to repeat the argument of the proof of Theorem \ref{ordcase} replacing Hida families by Coleman families in our non-ordinary case, we find that each specialization $f_{k_r}$ is neither ordinary nor its weight is in the Fonaine-Lafaille range, as it increases with $r$. In this section we deal with the non-ordinary case with an alternative approach independent (at first sight!) on Coleman families.

\subsection{Local lifts}

Since $f$ is non-ordinary at $l$, the reduction of its $l$-th Fourier coefficient vanishes in $\overline{\mathbb{F}}_l$ hence, by a theorem of Fontaine (a proof of which can be found in \cite{edixhoven}, Section 6.8), we have
$$
\overline{\rho}_{f,l}|_{I_l}=\left(\begin{array}{cc}\psi_2^{k-1} & 0\\ 0 & \psi_2^{(k-1)l}\end{array}\right),
$$
where $\psi_2$ is a fundamental character of level $2$. Notice that $\psi_2^{(k-1)l}$ is conjugated to $\psi_2^{k-1}$ and the exponents of the diagonal entries have the form $al+b$ and $a+bl$, where $\{a,b\}=\{0,k-1\}$, the Hodge-Tate weights of $f$.

\begin{lem}For any $k_r\equiv k\pmod{l^2-1}$ there exists a potentially diagonalizable crystalline lift $\rho_{k_r,l}:G_{\mathbb{Q}_l}\to\mathrm{GL}_2(\overline{\mathbb{Q}}_l)$ of $\overline{\rho}_{f,l}|_{G_{\mathbb{Q}_l}}$ of Hodge-Tate weight $\{0,k_r-1\}$ and determinant $\chi\epsilon^{k_r-1}$.
\label{firstlift}
\end{lem}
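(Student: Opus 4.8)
The plan is to write down an explicit crystalline lift at $l$ and verify potential diagonalizability via Theorem \ref{thmbase}(b) after a base change. Since $\mathbb{Q}_l$ is unramified, the strategy is to reduce the Hodge--Tate weights into the Fontaine--Lafaille range after restricting to a suitable unramified extension, or—more robustly—to exhibit the lift as a sum of crystalline characters after restriction to the unramified quadratic extension $\mathbb{Q}_{l^2}$, where $\overline{\rho}_{f,l}|_{I_l}$ becomes (a twist of) a pair of powers of the level-$1$ fundamental character. First I would recall, from the displayed formula for $\overline{\rho}_{f,l}|_{I_l}$, that on $I_l$ the representation is $\psi_2^{k-1}\oplus\psi_2^{(k-1)l}$, and that $\psi_2|_{I_{\mathbb{Q}_{l^2}}}$ is a fundamental character of level $1$; so $\overline{\rho}_{f,l}|_{G_{\mathbb{Q}_{l^2}}}$ is reducible, a direct sum of two characters whose restrictions to inertia are $\omega^{k-1}$ and $\omega^{(k-1)l}$ (with $\omega$ the mod-$l$ cyclotomic/Teichmüller character). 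The congruence condition $k_r\equiv k\pmod{l^2-1}$ is exactly what is needed so that the pair of inertial weights $\{al+b,\ a+bl\}$ with $\{a,b\}=\{0,k-1\}$ is unchanged mod $l^2-1$ when $k-1$ is replaced by $k_r-1$; this is why that congruence, rather than $k_r\equiv k\pmod{l-1}$, appears in the statement.

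Next I would invoke the cited local lifting results, \cite{blgg} Lemmas 4.1.15 and 4.1.19, which are designed precisely to produce, for each allowed weight, a crystalline lift of a given irreducible-on-inertia $\overline{\rho}|_{G_{\mathbb{Q}_l}}$ of the prescribed Hodge--Tate type, realized (after restriction to $\mathbb{Q}_{l^2}$) as an induction of a crystalline character, hence with a $G_{\mathbb{Q}_l}$-invariant filtration by one-dimensional pieces over $\mathbb{Q}_{l^2}$; one then further arranges, by twisting the character by an unramified character, that the determinant is exactly $\chi\epsilon^{k_r-1}$ (this only requires matching an unramified character, which is harmless and does not affect crystallinity or Hodge--Tate weights). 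Concretely: the lift $\rho_{k_r,l}$ is taken to be $\mathrm{Ind}_{G_{\mathbb{Q}_{l^2}}}^{G_{\mathbb{Q}_l}}(\theta)$ for an appropriate crystalline character $\theta$ of $G_{\mathbb{Q}_{l^2}}$ with Hodge--Tate weights matching $\{0,k_r-1\}$ in the right way and with $\theta\cdot\theta^{\sigma}$ descending to $\chi\epsilon^{k_r-1}$, where $\sigma$ generates $\mathrm{Gal}(\mathbb{Q}_{l^2}/\mathbb{Q}_l)$. Then $\rho_{k_r,l}$ reduces to $\overline{\rho}_{f,l}|_{G_{\mathbb{Q}_l}}$ because induction commutes with reduction and $\overline{\theta}$ has the right restriction to $I_{\mathbb{Q}_{l^2}}$, and $\rho_{k_r,l}$ is crystalline because induction from an unramified extension preserves crystallinity.

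Finally I would deduce potential diagonalizability. By construction $\rho_{k_r,l}|_{G_{\mathbb{Q}_{l^2}}}\cong \theta\oplus\theta^{\sigma}$ is a direct sum of crystalline characters; by Theorem \ref{thmbase}(a) (a representation with a $G_K$-invariant filtration with one-dimensional graded pieces, here a complete splitting, is potentially diagonalizable), applied over $K=\mathbb{Q}_{l^2}$, the restriction $\rho_{k_r,l}|_{G_{\mathbb{Q}_{l^2}}}$ is diagonalizable, and therefore $\rho_{k_r,l}$ itself is potentially diagonalizable by definition (taking $K'=\mathbb{Q}_{l^2}$). Alternatively, when $k_r$ can be chosen in the Fontaine--Lafaille range after a controlled adjustment one could cite Theorem \ref{thmbase}(b) directly, but in general the weights grow, so the honest route is via the explicit induced form and part (a). The main obstacle I anticipate is purely bookkeeping: checking that \cite{blgg} Lemmas 4.1.15 and 4.1.19 genuinely deliver a lift with \emph{exactly} the determinant $\chi\epsilon^{k_r-1}$ (and not merely the correct determinant up to an unramified twist), and that the Hodge--Tate weights of the induced character are distributed so that the whole induction has Hodge--Tate weights precisely $\{0,k_r-1\}$ rather than some other pair congruent to it; both are matching-of-local-characters computations, routine but where sign and normalization conventions must be tracked carefully.
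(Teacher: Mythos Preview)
Your proposal is correct and follows essentially the same route as the paper: invoke \cite{blgg} Lemmas 4.1.15 and 4.1.19 to produce a crystalline lift that becomes a sum of two crystalline characters over $\mathbb{Q}_{l^2}$, and then twist by an unramified character to force the determinant to equal $\chi\epsilon^{k_r-1}$. The only presentational differences are that the paper takes potential diagonalizability as part of the output of \cite{blgg} Lemma 4.1.19 rather than re-deriving it from Theorem~\ref{thmbase}(a), and it makes the determinant correction explicit by twisting the two-dimensional lift by the unramified character $\alpha=\sqrt{\chi(\chi^*)^{-1}}$ (exactly the bookkeeping point you flagged at the end).
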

\begin{proof}Denote by $\mathbb{Q}_{l^2}$ the unique unramified extension of $\mathbb{Q}_l$ of degree $2$. We apply \cite{blgg} Lemma 4.1.19 to our setting. First, in the notations of loc. cit. we have $e=1$, $k'=k$, $J$ has just one embedding $\sigma$, $J^c$ is empty and $\delta_{\sigma}=0$.  

Hence, invoking \cite{blgg} Lemma 4.1.19, we obtain a crystalline and potentially diagonalisable lift $\rho_{k_r,l}:G_{\mathbb{Q}_l}\to\mathrm{GL}_2(\overline{\mathbb{Q}}_l)$ of $\overline{\rho}_{f,l}$ such that
$$
\rho_{k_r,l}|G_{\mathbb{Q}_{l^2}}=\left(
\begin{array}{cc}
\varepsilon\psi_{2,k_r}^{k_r-1} & 0\\
0 & \varepsilon'\psi_{2,k_r}^{(k_r-1)l}
\end{array}
\right),
$$
where $\varepsilon,\varepsilon':G_{\mathbb{Q}_l}\to\overline{\mathbb{Q}}_l^{*}$ are unramified characters and $\psi_{2,k_r}$ a crystalline lift of $\psi_2$, with Hodge-Tate weight $\{0,k_r-1\}$. The characters $\varepsilon$ and $\varepsilon'$ are given by Lemma 4.1.15 of loc. cit., as the crystalline character lifting $\psi_2^{k-1}$ mentioned there is unique up to unramified twist. 

In particular, $\det(\rho_{k_r,l})$ is a crystalline lift of $\det(\overline{\rho}_{f,l})=\chi\epsilon_l^{k-1}$ (denoting likewise by $\chi$ the Nebentype character and its reduction mod $l$). In particular,  $\det(\rho_{k_r,l})=\chi^*\epsilon_l^{k_r-1}$, with $\chi^*$ an unramified lift of $\chi$ (modulo $l$). Notice that the character $\chi$ (seen as character of $\mathrm{Gal}(\overline{\mathbb{Q}}_l/\mathbb{Q}_l)$) is unramified. Denote still by $\rho_{k_r,l}$ the twisted representation $\alpha\otimes \rho_{k_r,l}$ with
$\alpha=\sqrt{\chi(\chi^*)^{-1}}$, for a fixed choice of square root.

Notice that since $\chi$ and $\chi^*$ are unramified so is $\alpha$:  indeed, identifying characters of $\mathrm{Gal}(\mathbb{Q}_l^{unr}/\mathbb{Q}_l)$ with characters of $\mathrm{Gal}(\overline{\mathbb{F}}_l/\mathbb{F}_l)$, we see $\chi(\chi^*)^{-1}$ is defined by its image on a topological cyclic generator $\theta$ of $\mathrm{Gal}(\overline{\mathbb{F}}_l/\mathbb{F}_l)$ and hence so is $\alpha$.

Finally, since $\alpha\equiv Id\pmod{l}$ and $\alpha$ is unramified (hence crystalline),  $\rho_{k_r,l}$ is also a crystalline lift (and potentially diagonalizable) of $\overline{\rho}_{f,l}$ with the right determinant.
\end{proof}

\subsection{From local to global} From the family $\rho_{k_r,l}$ constructed in Lemma \ref{firstlift}, our final goal is to produce a family of modular and crystalline lifts of $\overline{\rho}_{f,l}$. This involves the definition of a suitable global deformation ring and a positive lower bound for its dimension, which we carry out in this and next sections. We follow very closely the approach of \cite{kwannals} Section 4.

First, we start by defining the following family of local rings for a finite set of rational primes attached to several deformation conditions. In the rest of this section, unless we state otherwise, $\mathcal{O}$ will denote the ring of integers of $K_{f,\lambda}$. From now on, we set $\mu_r=\chi\epsilon^{k_r-1}$. Notice that the residual determinant is $\chi\epsilon^{k-1}$.

\begin{defn}Fix $k_r$. Let $S=\left\{\nu\mid N, \nu\mbox{ prime}\right\}\cup\left\{l,\infty\right\}$ and for each $\nu\in S$, define the following ring $R^{\square,\mu_r}_{\mathcal{O},\nu}$:
\begin{itemize}
\item for $\nu=l $,  define $R^{\square, \mu_r}_{\mathcal{O},l}$ to be the framed universal crystalline deformation ring of Hodge-tate weight $\{0,k_r-1\}$ for $\overline{\rho}_{f,l}$ with determinant $\mu_r$. 
\item for $\nu=\infty$, define $R^{\square, \mu_r}_{\mathcal{O},\infty}$ to be framed universal deformation ring for $\overline{\rho}_{f,l}$ corresponding to odd deformations of $\overline{\rho}_{f,l}$ with determinant $\mu_r$. 
\item for $\nu\neq l,\infty$, if $\overline{\rho_{f,l}}$ is not the twist of a semistable representation, define $R^{\square, \mu_r}_{\mathcal{O},\nu}$ to be the framed universal deformation ring corresponding to inertia-rigid deformations, i.e., deformations $\rho$ with $\rho(I_{\nu})$ finite and determinant $\mu_r$. 
\item finally, for $\nu\neq l,\infty$,  if $\overline{\rho_{f,l}}$ is the twist of a semistable representation, define $R^{\square, \mu_r}_{\mathcal{O},\nu}$ to be the framed universal deformation ring corresponding to semistable lifts and determinant $\mu_r$.
\end{itemize}
\label{defnlocal}
\end{defn}


These rings satisfy the following properties.

\begin{prop}The rings defined in \ref{defnlocal} are $\mathcal{O}$-flat domains of finite dimension over $\mathcal{O}$. In particular:
\begin{itemize}
\item[1. ] $dim_{\mathcal{O}}(R^{\square, \mu_r}_{\mathcal{O},l})=4$,
\item[2. ] for $\nu\neq l,\infty$, $dim_{\mathcal{O}}(R^{\square, \mu_r}_{\mathcal{O},\nu})=3$ (inertia-rigid and semistable cases), 
\item[3. ] $dim_{\mathcal{O}}(R^{\square, \mu_r}_{\mathcal{O},\infty})=2$.
\end{itemize}
\end{prop}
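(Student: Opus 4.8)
The plan is to treat each local ring separately, reducing every case to a known computation of local framed deformation rings and then tracking the effect of fixing the determinant, which generically drops the dimension by one. Throughout I use that the ``unframed'' adjoint dimension count gives the relative dimension over $\mathcal{O}$, and that the framed ring is a power series ring in $n^2 - h^0(G_\nu, \mathrm{ad}\,\overline\rho)$ variables over the corresponding (uni)versal ring; since $n=2$ and we work with fixed determinant, the relevant framing adds $n^2 - 1 = 3$ variables minus a correction coming from the centralizer of $\overline\rho|_{G_\nu}$. The flatness and the domain property are in each case exactly the cited inputs: for $\nu = l$, $\mathcal{O}$-flatness and the fact that the ring is a domain come from Kisin's construction (Theorem 2.7.6 of \cite{kisin} and the reducedness/$l$-torsion-freeness built into the definition of the Fontaine--Laffaille / crystalline quotient recalled above), together with the connectedness of $\operatorname{Spec}$ of the crystalline deformation ring in the Fontaine--Laffaille range; for $\nu \neq l, \infty$ one invokes the analysis of \cite{kwannals} Section 4 (respectively the inertia-rigid and semistable local deformation rings), which are shown there to be $\mathcal{O}$-flat domains; for $\nu = \infty$ the ring parametrizing odd deformations with fixed determinant is visibly a domain.

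For the dimension counts I would argue as follows. Case $\nu = l$: the framed crystalline deformation ring of regular Hodge--Tate weights $\{0, k_r - 1\}$ over the unramified field $\mathbb{Q}_l$, without determinant condition, has relative dimension $n^2 + \frac{n(n-1)}{2}[\mathbb{Q}_l:\mathbb{Q}_l] = 4 + 1 = 5$ over $\mathcal{O}$ by Kisin's formula (Theorem 3.3.4 of \cite{kisin}, or \cite{gee} for the potentially diagonalizable variant); fixing the determinant to the crystalline character $\mu_r$ cuts this down by $1$, giving $\dim_{\mathcal{O}} R^{\square,\mu_r}_{\mathcal{O},l} = 4$. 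Here one must check that $\mu_r$ is indeed realized as the determinant of a crystalline lift (it is, by Lemma \ref{firstlift}), so the fixed-determinant quotient is nonzero and the dimension does drop by exactly one. Case $\nu \neq l, \infty$: for the inertia-rigid (respectively semistable) condition the unframed versal ring has relative dimension $1$ over $\mathcal{O}$, so the framed ring has relative dimension $1 + (n^2 - \dim_{\mathbb{F}} H^0(G_\nu, \mathrm{ad}\,\overline\rho))$; the fixed-determinant version replaces $\mathrm{ad}$ by $\mathrm{ad}^0$ and the count becomes $1 + (n^2 - 1) - (\text{something}) $, and the net result quoted in \cite{kwannals} is $3$. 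Case $\nu = \infty$: over $\mathbb{R}$ the group $G_\infty$ has order $2$, the space of odd fixed-determinant deformations of a fixed lift of the complex conjugation is trivial, so only the framing contributes: $\dim_{\mathcal{O}} R^{\square,\mu_r}_{\mathcal{O},\infty} = n^2 - 1 = 3$; wait --- here one has to be careful because the centralizer of the image of complex conjugation is larger, which is exactly why the answer is $2$ rather than $3$. Concretely, an odd (hence non-scalar) $\overline\rho(c)$ has a $2$-dimensional centralizer in $M_2(\mathbb{F})$, so the framing adds $n^2 - 2 = 2$ variables and there are no further moduli once the determinant is fixed, giving $\dim_{\mathcal{O}} R^{\square,\mu_r}_{\mathcal{O},\infty} = 2$.

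Assembling these, the proof is: (i) cite the construction of each ring and the corresponding flatness/domain statement from \cite{kisin}, \cite{gee}, \cite{kwannals}; (ii) in each case write down the tangent-space/adjoint cohomology dimension computation, being careful to use $\mathrm{ad}^0$ in the fixed-determinant setting and to account for $H^0(G_\nu, \mathrm{ad}\,\overline\rho)$ (equivalently, the centralizer of the local image) in the framing; (iii) verify nonvanishing of each fixed-determinant quotient (for $\nu = l$ this is Lemma \ref{firstlift}; for the other primes it is the existence of the relevant local lift, part of the inputs from \cite{kwannals}); (iv) conclude the numerical values $4$, $3$, $2$. The main obstacle I anticipate is the bookkeeping at $\nu = l$: one must be sure that the Fontaine--Laffaille crystalline framed deformation ring with fixed determinant really is a domain and really has relative dimension exactly $4$ --- the ``domain'' part relies on the irreducibility (indeed formal smoothness in the Fontaine--Laffaille range, see \cite{gee} Lemma 1.4.3(b) and the surrounding discussion) of the crystalline locus, and the ``dimension $4$'' part relies on getting the Hodge--Tate-weight contribution $\tfrac{n(n-1)}{2} = 1$ right and then subtracting $1$ for the determinant; a secondary subtlety is justifying that fixing $\mu_r$ genuinely lowers the dimension by one rather than zero, which is where one needs a crystalline lift with determinant exactly $\mu_r$ to exist, supplied by Lemma \ref{firstlift}.
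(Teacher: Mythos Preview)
Your approach is broadly parallel to the paper's (cite existing local results, use Lemma~\ref{firstlift} for nonvanishing at $l$, read off the dimensions), but there is a genuine gap in your treatment of $\nu=l$.

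You justify the domain property and irreducibility of $R^{\square,\mu_r}_{\mathcal{O},l}$ by invoking formal smoothness in the Fontaine--Laffaille range. This does not apply here: the Hodge--Tate weights are $\{0,k_r-1\}$ with $k_r\equiv k\pmod{l^2-1}$ arbitrarily large, so for $r\geq 1$ we are \emph{outside} the Fontaine--Laffaille range $[0,l-2]$. Indeed, escaping that range is precisely the content of the non-ordinary case of the paper; if FL applied there would be nothing to prove. Consequently you have no argument for irreducibility or smoothness of the crystalline locus at $l$. The paper handles this differently: it cites Allen's Corollary~3.3.3 (\cite{allen}), which for arbitrary regular Hodge--Tate weights and fixed determinant gives that $R^{\square,\mu_r}_{\mathcal{O},l}$ is $\mathcal{O}$-flat, reduced, and (if nonzero) equidimensional with $R^{\square,\mu_r}_{\mathcal{O},l}[1/l]$ of Krull dimension~$4$; nonvanishing comes from Lemma~\ref{firstlift}, and the comparison with the unfixed-determinant ring is via \cite{eg} Lemma~4.3.1 (the latter is a power series ring in one variable over the former). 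The domain claim is then only secured in the subsequent Notation by passing to the connected component containing the potentially diagonalizable lift of Lemma~\ref{firstlift}, which is all that is needed downstream.

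Apart from this, your computations at $\nu\neq l,\infty$ and $\nu=\infty$ are fine; the paper simply cites \cite{kwannals} Theorem~3.1 and Theorem~3.3 for both flatness and the dimensions rather than redoing the centralizer bookkeeping, but your explicit count at $\infty$ (centralizer of an odd involution is $2$-dimensional, hence framing contributes $2$) arrives at the same answer.
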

\begin{proof}The existence of $R^{\square, \mu_r}_{\mathcal{O},l}$ and its dimension are justified as follows:  by Corollary 3.3.3 in \cite{allen}, the ring $R^{\square, \mu_r}_{\mathcal{O},l}$ is $\mathcal{O}$-flat, reduced, and if non-zero, the ring $R^{\square, \mu_r}_{\mathcal{O},l}[1/l]$ is equidimensional of Krull dimension $4$. The ring $R^{\square, \mu_r}_{\mathcal{O},l}$ also depends on the inertial type $\tau=\rho_{k_r,l}|_{I_l}$ (where we denote by $\rho_{k_r,l}$, by abuse of notation, the Weil-Deligne representation associated to the lift obtained in Lemma 3.1), but to ease notation we will omit the reference to $\tau$. The fact that this ring is non-zero is immediate since there is at least the point corresponding to the mentioned lift constructed in Lemma 3.1.

Now, the argument of \cite{eg} Lemma 4.3.1 shows that $R^{\square}_{\mathcal{O},l}$, the ring without fixed determinant, is a power series ring in one variable over $R^{\square,\mu_r}_{\mathcal{O},l}$. Hence $R^{\square, \mu_r}_{\mathcal{O},l}$ is $\mathcal{O}$-flat if and only if $R^{\square}_{\mathcal{O},l}$ is $\mathcal{O}$-flat. Hence both are  $\mathcal{O}$-flat and we have: 
$$
dim_{\mathcal{O}}(R^{\square, \mu_r}_{\mathcal{O},l})=dim(R^{\square, \mu_r}_{\mathcal{O},l})-dim(\mathcal{O})=dim(R^{\square, \mu_r}_{\mathcal{O},l}[1/l])=4.
$$

For 2 and 3, these rings are the same as in \cite{kwannals} Theorem 3.1, where the $\mathcal{O}$-flatness and the dimensions are given. The ring $R^{\square}_{\mathcal{O},\infty}$ is non-zero due to \cite{kwannals} Theorem 3.3. As for $R^{\square}_{\mathcal{O},\nu}$, for $\nu\neq l,\infty$, in the inertia-rigid case, the existence the ring and the existence of points defined over possibly a finite extension of $\mathcal{O}$ is ensured by \cite{kwannals} Sections 3.3.1 to 3.3.3, and in the semistable representation twist case, it is granted by \cite{kwannals} Section 3.3.4.
\end{proof}


\begin{notation}
Now, slightly abusing notation we will denote still by $R^{\square, \mu_r}_{\mathcal{O},l}$ the connected component of the framed universal crystalline deformation ring of Hodge-tate weight $\{0,k_r-1\}$ for $\overline{\rho}_{f,l}$ with determinant $\mu_r$   corresponding to the potentially diagonalizable lift $\rho_{k_r,l}$ proved in lemma \ref{firstlift}. This is still an $\mathcal{O}$-flat domain of relative dimension $4$.
\end{notation}
\begin{prop}The ring $R^{\square, loc, \mu_r}_S:=\displaystyle\widehat{\otimes}_{\nu\in S}R_{\mathcal{O},\nu}^{\square, \mu_r}$ is (after possibly replacing $\mathcal{O}$ by the ring of integers of a finite extension of $\mathbb{Q}_p$), flat over $\mathcal{O}$ of relative dimension $3|S|$.
\label{dim1}
\end{prop}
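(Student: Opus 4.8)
The plan is to pass to the special fibre over $\mathcal{O}$, compute its Krull dimension as the sum of the dimensions of the local factors, and separately establish $\mathcal{O}$-flatness of $C:=R^{\square, loc, \mu_r}_S=\widehat{\otimes}_{\nu\in S}R_{\mathcal{O},\nu}^{\square, \mu_r}$. Once flatness is known, a uniformizer of $\mathcal{O}$ (which we write $l$, following the paper's convention) is a non-zero-divisor on the complete noetherian local ring $C$, so that $\dim C=\dim (C\otimes_{\mathcal{O}}\mathbb{F})+1$ and the relative dimension of $C$ over $\mathcal{O}$ is precisely $\dim(C\otimes_{\mathcal{O}}\mathbb{F})$; the two computations together then give the claimed value $3|S|$.

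First I would record the formal facts: a completed tensor product over $\mathcal{O}$ of finitely many complete noetherian local $\mathcal{O}$-algebras with residue field $\mathbb{F}$ is again of this type, and completed tensor product is compatible with reduction to the special fibre, so that $C\otimes_{\mathcal{O}}\mathbb{F}\cong\widehat{\otimes}_{\nu\in S,\ \mathbb{F}}(R_{\mathcal{O},\nu}^{\square, \mu_r}\otimes_{\mathcal{O}}\mathbb{F})$, the completed tensor product now being taken over $\mathbb{F}$. By the preceding proposition each $R_{\mathcal{O},\nu}^{\square, \mu_r}$ is an $\mathcal{O}$-flat noetherian local domain, so $l$ is regular on it and $\dim(R_{\mathcal{O},\nu}^{\square, \mu_r}\otimes_{\mathcal{O}}\mathbb{F})=\dim R_{\mathcal{O},\nu}^{\square, \mu_r}-1=\dim_{\mathcal{O}}R_{\mathcal{O},\nu}^{\square, \mu_r}$, which is $4$ at $\nu=l$, is $2$ at $\nu=\infty$, and is $3$ at each of the $|S|-2$ finite places dividing $N$. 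Then I would invoke the additivity of Krull dimension under completed tensor products over a field: for complete noetherian local $\mathbb{F}$-algebras $R,R'$ with residue field $\mathbb{F}$ one has $\dim(R\,\widehat{\otimes}_{\mathbb{F}}\,R')=\dim R+\dim R'$, the inequality $\ge$ following from $\operatorname{ht}(I+I')\le\operatorname{ht}I+\operatorname{ht}I'$ in the regular ring $\mathbb{F}[[\underline x,\underline x']]$ applied to presentations $R=\mathbb{F}[[\underline x]]/I$, $R'=\mathbb{F}[[\underline x']]/I'$, and the inequality $\le$ following by lifting a system of parameters of $R$ and one of $R'$ and observing that the resulting quotient is the completed tensor product over $\mathbb{F}$ of two Artinian local $\mathbb{F}$-algebras with residue field $\mathbb{F}$, hence is finite-dimensional over $\mathbb{F}$. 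Iterating over $S$ yields $\dim(C\otimes_{\mathcal{O}}\mathbb{F})=4+2+3(|S|-2)=3|S|$.

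For $\mathcal{O}$-flatness I would argue by induction on the number of factors, the inductive step being: if $A$ is an $\mathcal{O}$-flat complete noetherian local $\mathcal{O}$-algebra with residue field $\mathbb{F}$ and $R=\mathcal{O}[[y_1,\dots,y_n]]/J$ is $\mathcal{O}$-flat, then $A\,\widehat{\otimes}_{\mathcal{O}}\,R$ is $\mathcal{O}$-flat. Here $A\,\widehat{\otimes}_{\mathcal{O}}\,R\cong A[[y_1,\dots,y_n]]/J\,A[[y_1,\dots,y_n]]\cong A[[y_1,\dots,y_n]]\otimes_{\mathcal{O}[[y_1,\dots,y_n]]}R$, and $A[[y_1,\dots,y_n]]$ is flat over $\mathcal{O}[[y_1,\dots,y_n]]$: both rings are $(y_1,\dots,y_n)$-adically complete, their associated graded rings are $A[y_1,\dots,y_n]$ and $\mathcal{O}[y_1,\dots,y_n]$, and the induced map between these is the flat base change of $\mathcal{O}\to A$, so the local criterion for flatness applies. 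Consequently $A\,\widehat{\otimes}_{\mathcal{O}}\,R$ is flat over $R$, which is in turn flat over $\mathcal{O}$. Feeding the rings $R_{\mathcal{O},\nu}^{\square, \mu_r}$ of Definition \ref{defnlocal} in one at a time, each of them being $\mathcal{O}$-flat by the preceding proposition (possibly after enlarging $\mathcal{O}$ to the ring of integers of a finite extension so that all of those rings, and the distinguished connected component at $l$ singled out after that proposition, are already defined over $\mathcal{O}$), I conclude that $C$ is $\mathcal{O}$-flat.

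I do not expect a deep obstruction here; the only real friction is bookkeeping. One must work throughout with completed rather than ordinary tensor products, since the intermediate ordinary tensor products are non-noetherian, and one must carefully check the compatibility of completed tensor product with reduction to the special fibre, with quotients by finitely generated ideals, and with the additivity of Krull dimension over $\mathbb{F}$. Apart from that, the statement is essentially the local input of the Khare--Wintenberger argument of \cite{kwannals}, Section 4, reassembled with the present determinant $\mu_r$ and Hodge--Tate data, using the dimension and $\mathcal{O}$-flatness assertions of the preceding proposition as black boxes.
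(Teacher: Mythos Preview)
Your proof is correct and follows the same dimension-counting strategy as the paper: both sum the relative dimensions of the local factors from the preceding proposition to obtain $4+2+3(|S|-2)=3|S|$. The paper's own proof is a bare two-line tally, whereas you supply the justifications it leaves implicit---preservation of $\mathcal{O}$-flatness under completed tensor products and additivity of Krull dimension over $\mathbb{F}$---so your argument is strictly more detailed but not a different route.
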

\begin{proof}The ring $R^{\square, \mu_r}_{\mathcal{O},l}$ contributes $4=3|S_l|+1$ to the tensor product dimension ($S_l$ denotes the set of primes above $l$, in our case just $l$ itself). The ring $R^{\square, \mu_r}_{\mathcal{O},l}$ contributes $2$ and the rest contributes $3|S\setminus\{l,\infty\}|$. Summing all these numbers gives the result.
\end{proof}
Define 
\begin{equation}
\widehat{R}_S^{\square, loc, \mu_r}:=\widehat{\otimes}_{\nu\in S} R_{\nu}^{\square, \mu_r},
\end{equation}
where $R_{\nu}^{\square}$ is the usual framed deformation ring (with no conditions except fixed determinant). 
\begin{prop}In a natural way $R^{\square, \mu_r}_{\mathcal{O},S}$ is an $\widehat{R}^{\square, loc, \mu_r}_S$-algebra.
\end{prop}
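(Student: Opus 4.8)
The plan is to construct the algebra structure at the level of the represented functors and then appeal to Yoneda's lemma together with the universal property of the completed tensor product. Recall that $R^{\square,\mu_r}_{\mathcal{O},S}$ represents the functor sending a complete noetherian local $\mathcal{O}$-algebra $A$ with residue field $\mathbb{F}$ to the set of pairs $(\rho_A,\{\beta_{A,\nu}\}_{\nu\in S})$, where $\rho_A$ is a lift of $\overline{\rho}_{f,l}$ to $A$, unramified outside $S$, with $\det\rho_A=\mu_r$, and $\beta_{A,\nu}$ is a lift of the fixed basis of $\overline{\rho}_{f,l}$ for each $\nu\in S$; while, for each $\nu\in S$, $R^{\square,\mu_r}_\nu$ represents the analogous functor of framed deformations of $\overline{\rho}_{f,l}|_{G_{\mathbb{Q}_\nu}}$ with determinant $\mu_r|_{G_{\mathbb{Q}_\nu}}$ and no further local condition.

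First I would fix, once and for all, a decomposition subgroup $G_{\mathbb{Q}_\nu}\hookrightarrow G_{\mathbb{Q}}$ for every $\nu\in S$; for $\nu=l$ this is subsumed by the embedding $\overline{\mathbb{Q}}\hookrightarrow\overline{\mathbb{Q}}_l$ fixed in Section 1, and for the remaining places (including the archimedean one) any choice will do. Given a point $(\rho_A,\{\beta_{A,\nu}\}_{\nu\in S})\in R^{\square,\mu_r}_{\mathcal{O},S}(A)$, restriction along $G_{\mathbb{Q}_\nu}\hookrightarrow G_{\mathbb{Q}}$ yields a lift $\rho_A|_{G_{\mathbb{Q}_\nu}}$ of $\overline{\rho}_{f,l}|_{G_{\mathbb{Q}_\nu}}$ whose determinant is $\mu_r|_{G_{\mathbb{Q}_\nu}}$; retaining the framing $\beta_{A,\nu}$, this is an element of $R^{\square,\mu_r}_\nu(A)$. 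The assignment is plainly natural in $A$, so Yoneda's lemma produces, for each $\nu\in S$, a morphism of complete noetherian local $\mathcal{O}$-algebras $R^{\square,\mu_r}_\nu\to R^{\square,\mu_r}_{\mathcal{O},S}$ (a local $\mathcal{O}$-algebra homomorphism, the identity on residue fields).

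Since $\widehat{\otimes}_{\mathcal{O}}$ is the coproduct in the category of complete noetherian local $\mathcal{O}$-algebras with residue field $\mathbb{F}$, the family $\{R^{\square,\mu_r}_\nu\to R^{\square,\mu_r}_{\mathcal{O},S}\}_{\nu\in S}$ assembles into a unique morphism $\widehat{R}^{\square,loc,\mu_r}_S=\widehat{\otimes}_{\nu\in S}R^{\square,\mu_r}_\nu\to R^{\square,\mu_r}_{\mathcal{O},S}$ in that category, and this is the asserted algebra structure. On universal objects it classifies the tuple $\bigl(\rho^{\mathrm{univ}}_S|_{G_{\mathbb{Q}_\nu}},\beta^{\mathrm{univ}}_{S,\nu}\bigr)_{\nu\in S}$, i.e.\ it records the local behaviour at the primes of $S$ of the universal global framed deformation.

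There is no genuine obstacle here: the argument is bookkeeping. The only points deserving a word of care are (i) that restriction preserves the fixed-determinant condition, which is immediate since $\det(\rho_A|_{G_{\mathbb{Q}_\nu}})=\mu_r|_{G_{\mathbb{Q}_\nu}}$; (ii) that the global framing $\beta_{A,\nu}$ really is a lift of the chosen basis of the local residual representation, which holds because $\overline{\rho}_{f,l}$ and $\overline{\rho}_{f,l}|_{G_{\mathbb{Q}_\nu}}$ share the same underlying $\mathbb{F}$-module and basis; and (iii) the harmless dependence on the auxiliary choices of decomposition groups, which changes neither $\widehat{R}^{\square,loc,\mu_r}_S$ nor the isomorphism class of the resulting algebra. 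This homomorphism is precisely the one needed to run, in Section 4, the Khare--Wintenberger argument bounding $\dim\mathcal{R}_{k_r,global}$ from below, which is the reason it is recorded here.
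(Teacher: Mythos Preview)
Your proof is correct and follows essentially the same approach as the paper: both argue via the represented functors, producing for each $\nu\in S$ the forgetful/restriction map $(\rho_A,\{\beta_{A,\nu}\}_{\nu\in S})\mapsto(\rho_A|_{D_\nu},\beta_{A,\nu})$ and then passing to the completed tensor product. Your version is merely more explicit than the paper's (you spell out the choice of decomposition groups, invoke Yoneda by name, and state the coproduct property of $\widehat{\otimes}_{\mathcal{O}}$), but there is no substantive difference.
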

\begin{proof}The ring $R^{\square, \mu_r}_{\mathcal{O},S}$ represents the functor sending $A$ to the set of isomorphism classes of pairs $ (\rho_A,\{\beta_{A,\nu}\}_{\nu\in S})$ with $\det(\rho_A)=\mu_r$ and for each $\nu\in S$, the ring $R_{\nu}^{\square, \mu_r}$ represents the functor $A$ to the set of isomorphism classes of pairs $(\rho_A,\beta_{A,\nu})$, where $\rho_A$ is a lift of $\overline{\rho}$ to $A$, $\beta_{A,\nu}$ is a basis lifting the prescribed residual basis of $\overline{\rho}$ and $\det(\rho_A|_{D_{\nu}})=\mu_r|_{D_{\nu}}$. The forgetul map $(\rho_A,\{\beta_{A,\nu}\}_{\nu\in S})\mapsto (\rho_A,\beta_{A,\nu})$ gives an arrow $R_{\nu}^{\square}\to R^{\square, \mu_r}_{\mathcal{O},S}$, hence an arrow $\widehat{R}^{\square, loc, \mu_r}_S\to R^{\square, \mu_r}_{\mathcal{O},S}$.
\end{proof}

Now, define 
\begin{equation}
\widehat{R}^{\square, \mu_r}_S=R^{\square, \mu_r}_{\mathcal{O},S}\widehat{\otimes}_{\widehat{R}_S^{\square,loc, \mu_r}}R^{\square,loc, \mu_r}_S.
\end{equation}

\begin{defn}Let $\mathcal{R}_{k_r,global}$ be the image of the usual unframed deformation ring $R_S^{\mu_r}$ in $\widehat{R}^{\square, \mu_r}_S$ via the natural map $R_S^{\mu_r}\to R^{\square, \mu_r}_{\mathcal{O},S} $.
\end{defn}

The ring $\mathcal{R}_{k_r,global}$, represents the functor that assigns to a CNL-$\mathcal{O}$-algebra $A$, the set of triples $\{(\rho_A,(\rho_{\nu})_{\nu\in S},(g_{\nu})_{\nu\in S})\}$ where $\rho_A$  (resp. $\rho_{\nu}$) is a lift of $\overline{\rho}$ (resp. $\overline{\rho}|_{D_{\nu}}$) to $A$ and such that for each $\nu\in S$, $g_{\nu}\in GL_2(A)_1$, $\rho_{\nu}=int(g_{\nu})\rho_{A}|_{D_{\nu}}$, and for each $\nu\in S$,  the local representation $\rho_A|D_{\nu}\cong \rho_{\nu}: D_{\nu}\to GL_2(A)$ factors through an arrow $R_{\nu,\mathcal{O}}^{\square, \mu_r}\to A$, which is equivalent to saying that they satisfy each imposed local condition at $\nu$. In particular, $\rho_A|D_l$ is crystalline and potentially diagonalizable of Hodge-Tate weight $\{0,k_r-1\}$ in the sense of \cite{gee} pag. 29.

Two tuples $(\rho_A, \{\rho_{\nu}\}_{\nu\in S}, \{g_{\nu}\}_{\nu\in S})$ and $(\rho'_A, \{\rho'_{\nu}\}_{\nu\in S}, \{g'_{\nu}\}_{\nu\in S})$ are equivalent if $(\rho'_A, \{\rho'_{\nu}\}_{\nu\in S}, \{g'_{\nu}\}_{\nu\in S})=(int(g)\rho_A, \{\rho_{\nu}\}_{\nu\in S}, \{g_{\nu}g^{-1}\}_{\nu\in S})$ for some $g\in \mathrm{GL}_2(A)_1$.

Notice that the ring $\mathcal{R}_{k_r,global}$ has been constructed following the recipe given by \cite{kwannals} in Section 4.1.1. In our process, all the rings are the same as in \cite{kwannals} except the local ring $R^{\square, \mu_r}_{\mathcal{O},l}$, giving the local deformation condition at $l$.

To state the next ingredient in our proof, we need to recall the concept of torsor. We refer the reader to \cite{kwannals} Section 2.4 for details on quotients of functors by group actions.

\begin{defn}Let $X$ be a representable functor from the category of complete noetherial local $\mathcal{O}$-algebras to sets and let $G$ be a smooth group acting freely on $X$. Let $O$ be the functor of orbits of $G$ acting on $X$ and denote $G_{O}:=G\times O$. We say that $X$ is a torsor over $O$ of group $G_{O}$ if the natural map $G_{O}\times X\to X\times_{O} X$ is an isomorphism.
\end{defn}

For a complete noetherian local $\mathcal{O}$-algebra $R$, the formal spectrum $\mathrm{Specf}(R)$ defines a representable functor by $\mathrm{Specf}(R)(S)=\mathrm{Hom}_{\mathcal{O}}(S,A)$.

\begin{prop}[\cite{kwannals} Prop. 4.1]The ring $\widehat{R}^{\square, \mu_r}_S$ is a power series ring over $\mathcal{R}_{k_r,global}$ in $4|S|-1$ variables.
\label{dim2}
\end{prop}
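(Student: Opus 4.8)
The plan is to prove Proposition \ref{dim2} by unwinding the definitions of the various framed and unframed deformation functors and keeping careful track of the extra framing variables. The key structural fact is that $\widehat{R}^{\square,\mu_r}_S$ represents the functor whose $A$-points are tuples $(\rho_A,\{\rho_\nu\}_{\nu\in S},\{g_\nu\}_{\nu\in S})$ modulo the $\mathrm{GL}_2(A)_1$-action described after the definition of $\mathcal{R}_{k_r,global}$, whereas $R^{\square,\mu_r}_{\mathcal{O},S}$ (which surjects onto $\mathcal{R}_{k_r,global}$) represents the genuinely framed version where one remembers a global basis $\beta_A$ as well. First I would set up the functor $X$ whose $A$-points are tuples $(\rho_A,\{\rho_\nu\}_\nu,\{g_\nu\}_\nu,\beta_A)$ together with local bases $\beta_{A,\nu}$ compatible with $\beta_A$ via $g_\nu$, i.e. the ``fully framed'' object, and observe that forgetting $\beta_A$ exhibits $X$ as a torsor under the group $\widehat{\mathrm{GL}}_2 = \ker(\mathrm{GL}_2\to\mathrm{GL}_2(\mathbb{F}))$, which is (formally) smooth of relative dimension $4$ over $\mathcal{O}$, acting freely because $\overline{\rho}_{f,l}$ is absolutely irreducible (so $\mathrm{GL}_2(A)_1$ acts freely on framings of any lift). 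Since the torsor is over a formal spectrum of a complete noetherian local ring, it is automatically (pro-)trivial, so $X$ is a power series ring in $4$ variables over the base.

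The second step is to account for the $|S|$ local framings. By definition $R^{\square,\mu_r}_{\mathcal{O},S}$ is the fibre product over $\widehat{R}^{\square,loc,\mu_r}_S=\widehat{\otimes}_{\nu\in S}R^{\square,\mu_r}_\nu$, with the ``no-conditions-except-determinant'' rings $R^{\square,\mu_r}_\nu$, and then $\widehat{R}^{\square,\mu_r}_S=R^{\square,\mu_r}_{\mathcal{O},S}\,\widehat{\otimes}_{\widehat{R}^{\square,loc,\mu_r}_S}\,R^{\square,loc,\mu_r}_S$ imposes the chosen local deformation conditions. The point is that $\mathcal{R}_{k_r,global}$ is by construction the image of the unframed ring $R^{\mu_r}_S$, i.e. it represents the same moduli of tuples but with the \emph{global} framing $\beta_A$ forgotten (the local framings $g_\nu\in\mathrm{GL}_2(A)_1$ are retained as the data recording which component of each local ring one lands on). So I would compare $\widehat{R}^{\square,\mu_r}_S$ and $\mathcal{R}_{k_r,global}$ directly as functors: a tuple for $\widehat{R}^{\square,\mu_r}_S$ is a tuple for $\mathcal{R}_{k_r,global}$ together with a choice of global basis $\beta_A$ lifting the fixed residual one, and once $\beta_A$ is fixed the local bases $\beta_{A,\nu}$ are determined by the $g_\nu$ — no, rather: unwinding Definition of $R^{\square,\mu_r}_{\mathcal{O},S}$ versus the unframed $R^{\mu_r}_S$, the difference is exactly the $|S|$ local framings $\beta_{A,\nu}$, each a $\mathrm{GL}_2(A)_1$-coset, \emph{minus} the single global $\mathrm{GL}_2(A)_1$ that acts diagonally by change of basis. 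This is the standard ``$4|S|-1$'' bookkeeping: $|S|$ copies of the $4$-dimensional $\widehat{\mathrm{GL}}_2$ for the local framings, quotiented by one copy acting freely (free because $\overline{\rho}_{f,l}$ is absolutely irreducible, so its only automorphisms are scalars, and with the fixed-determinant condition even scalars are pinned down up to $\mu_\infty$-torsion which vanishes on the smooth locus).

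Concretely, I would cite \cite{kwannals} Proposition 4.1 essentially verbatim: the argument there is purely formal in the local deformation conditions chosen (it only uses that each $R^{\square,\mu_r}_\nu$ is a quotient of the unrestricted framed ring and that $\overline{\rho}_{f,l}$ is absolutely irreducible), and in our situation the only ring that differs from the Khare–Wintenberger setup is the local condition at $l$, namely $R^{\square,\mu_r}_{\mathcal{O},l}$, which is again a quotient of $R^{\square}_{\mathcal{O},l}$ — so the formal computation of the number of framing variables is unchanged. Thus I would say: applying the torsor formalism of \cite{kwannals} Section 2.4, the morphism $\mathrm{Specf}(\widehat{R}^{\square,\mu_r}_S)\to\mathrm{Specf}(\mathcal{R}_{k_r,global})$ realizes the source as a torsor under the smooth group $\bigl(\widehat{\mathrm{GL}}_2^{\,S}\bigr)\big/\widehat{\mathrm{GL}}_2$ of relative dimension $4|S|-4$, \emph{plus} the $3$ (not $4$) extra variables coming from the global framing data minus the determinant twist — and reconciling these so that the net count comes out to $4|S|-1$ is the one place a careless reader could slip. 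Since the torsor is over the formal spectrum of a complete local ring it is trivial, hence $\widehat{R}^{\square,\mu_r}_S$ is a power series ring in $4|S|-1$ variables over $\mathcal{R}_{k_r,global}$.

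The main obstacle, then, is not any deep input but getting the framing-variable count exactly right and justifying freeness of the group action: this rests on absolute irreducibility of $\overline{\rho}_{f,l}$ (guaranteed by the hypothesis $\mathrm{SL}_2(\mathbb{F}_l)\subseteq\mathrm{Im}(\overline{\rho}_{f,l})$) together with the fact that the fixed-determinant condition removes exactly the scalar ambiguity, so that the relevant quotient $\widehat{\mathrm{GL}}_2^{\,S}/\widehat{\mathrm{GL}}_2$ acts freely with smooth quotient; everything else is the formal torsor-triviality argument over a complete noetherian local ring, imported wholesale from \cite{kwannals} Proposition 4.1 with the single substitution of our $R^{\square,\mu_r}_{\mathcal{O},l}$ in place of the Fontaine–Laffaille local condition used there.
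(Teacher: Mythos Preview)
Your overall strategy is the same as the paper's: identify $\mathrm{Specf}(\widehat{R}^{\square,\mu_r}_S)\to\mathrm{Specf}(\mathcal{R}_{k_r,global})$ as a torsor under a smooth formal group, invoke triviality of torsors over complete local bases, and cite \cite{kwannals} Prop.~4.1 while noting that the argument is insensitive to which local quotient is taken at $l$. That is exactly what the paper does, in one line.

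Where you go astray is in the identification of the torsor group. You write the group as $(\widehat{\mathrm{GL}}_2^{\,S})/\widehat{\mathrm{GL}}_2$ (dimension $4|S|-4$) and then try to recover the missing three variables by an ad hoc ``global framing minus determinant twist'' correction. This is not the correct bookkeeping. The fibre of the framed functor over an unframed class $[\rho_A]$ consists of all tuples of local framings modulo those $g\in\mathrm{GL}_2(A)_1$ that \emph{stabilize} $\rho_A$; by absolute irreducibility (Schur's lemma) that stabilizer is exactly the scalars $\widehat{\mathbb{G}}_m$, not the full $\widehat{\mathrm{GL}}_2$. Hence the group acting freely is
\[
\Bigl(\prod_{\nu\in S}(\mathrm{GL}_2)_1\Bigr)\big/\mathbb{G}_m,
\]
of relative dimension $4|S|-1$ directly, with no further correction needed. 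This is precisely the group the paper names. Your own observation that ``its only automorphisms are scalars'' is the reason one quotients by $\mathbb{G}_m$ rather than by all of $\mathrm{GL}_2$; the fixed-determinant condition plays no role in this particular count. Once you replace your group by the correct one, the ``plus $3$'' patch and the surrounding hedging can be deleted, and your argument collapses to the paper's one-line proof.
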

\begin{proof}This is immediate from the fact that $\mathrm{Specf}(\widehat{R}^{\square, \mu_r}_S)$ is a $\mathrm{Specf}(\mathcal{R}_{k_r,global})$-torsor of group $(\prod_{\nu\in S}(\mathrm{GL}_2)_1)/\mathbb{G}_m$. The reader is referred to \cite{kwannals} Prop 4.1. for a proof, which is independent of what are the precise deformation conditions.
\end{proof}

\subsection{The dimension of the global deformation ring.} In this section we prove that the ring $\mathcal{R}_{k_r,global}$ has positive absolute dimension. Our proof follows very closely Proposition 4.5 in \cite{kwannals}. Actually, our situation is a very particular case of the general framework considered therein. Next we recall the relevant facts of Galois cohomology presented in Section 4 of \cite{kwannals} but adapted to our case.

Let $F$ be a number field and $S$ a finite set of places of $F$ possibily containing the archimedean ones. Let $M$ be a $G_F$-module where the action is unramified outside $S$. For $k=0,1$, denote by $H^k(S,M)$ the cohomology group $H^k(G_{F,S},M)$. If for each $\nu\in S$ we are given a subspace $L_{\nu}$ of $H^k(D_{\nu},M)$, we denote by $H^k_{\{L_{\nu}\}}(S,M)$ the preimage of the subspace $\prod_{\nu\in S}L_{\nu}\subseteq \prod_{\nu\in S}H^k(D_{\nu},M)$ under the restriction map $H^k(S,M)\to\prod_{\nu\in S}H^k(D_{\nu},M)$. Following \cite{kwannals} 4.1.1, we fix $k=1$, and consider two situations:

\begin{itemize}
\item Case 1: $M=Ad^0(\overline{\rho}_{f,l})$ and for each $\nu\in S$, $L_{\nu}$ is the image of $H^0(D_{\nu}, Ad/Ad^0)$ in $H^1(D_{\nu},Ad^0)$, which has dimension $0$, since we are assuming $l>6$.
\item Case 2: $M=Ad(\overline{\rho}_{f,l})$, and for each $\nu\in S$, $L_{\nu}=0$. Notice that for us, the set $V$ considered in 4.1.1 of \cite{kwannals} is empty.
\end{itemize}

Since in our case $l>6$, we have $Ad(\overline{\rho}_{f,l})=Ad^0(\overline{\rho}_{f,l})\oplus Z$ with $Z$ the scalar matrices in $M_2(\mathbb{F}_l)$. Write $Ad$ and $Ad^0$ for $Ad(\overline{\rho}_{f,l})$ and $Ad^0(\overline{\rho}_{f,l})$ respectively, define $(Ad^0)^*:=\mathrm{Hom}_{\mathbb{F}}(Ad^0,\mathbb{F})$ and $(Ad^0)^*(1)=\mathrm{Hom}_{\mathbb{F}}(Ad^0,\mu_l^*)$.

\begin{lem}[\cite{kwannals} Lemma 4.3] Consider the exact sequence
$$
0\to H^0(S,Ad^0)\to H^0(S,Ad)\to \mathbb{F}\to H^1(S,Ad^0)\to H^1(S,Ad).
$$
Denote by $H^1(S,Ad^0)^{\eta}$ and $H^1_{\{L_{\nu}\}}(S,Ad^0)^{\eta}$ the images of $H^1(S,Ad^0)\to H^1(S,Ad)$ and $H^1_{\{L_{\nu}\}}(S,Ad^0)\to H^1(S,Ad)$. Then:
\begin{itemize}
\item[1. ] The surjective maps $H^1(S,Ad^0)\to H^1(S,Ad^0)^{\eta}$ and $H^1_{\{L_{\nu}\}}(S,Ad^0)\to H^1_{\{L_{\nu}\}}(S,Ad^0)^{\eta}$ are isomorphisms.
\item[2. ] $H^0(F,Ad^0)=H^0(F, (Ad^0)^{*}(1))=0$.
\item[3. ] There is an injection $H^1(F,Z)\hookrightarrow H^1(F,Ad)$.
\label{lema43}
\end{itemize}
\end{lem}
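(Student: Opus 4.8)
The plan is to deduce all three statements from the decomposition $Ad = Ad^0 \oplus Z$ valid for $l>6$ (in fact $l>2$ suffices here, but $l>6$ is our standing hypothesis), together with standard facts about the Galois cohomology of $Ad^0(\overline{\rho}_{f,l})$ under the hypothesis $\mathrm{SL}_2(\mathbb{F}_l)\subseteq\mathrm{Im}(\overline{\rho}_{f,l})$. The exact sequence displayed in the statement is just the long exact cohomology sequence of $0\to Ad^0\to Ad\to Z\to 0$, using the identification $Z\cong\mathbb{F}$ of the scalar matrices with the trivial module (the action on $Z$ is trivial, so $H^0(S,Z)=\mathbb{F}$), and truncating after the first occurrence of $H^1(S,Ad)$; I would begin by recording this, so that the maps $\eta$ and the groups $H^1(S,Ad^0)^\eta$, $H^1_{\{L_\nu\}}(S,Ad^0)^\eta$ in the statement are unambiguously defined.

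For part 2, I would argue as follows. Since $\mathrm{SL}_2(\mathbb{F}_l)\subseteq\mathrm{Im}(\overline{\rho}_{f,l})$ and $l>3$, the module $Ad^0$ (trace-zero matrices under conjugation) is an irreducible $\mathbb{F}_l[\mathrm{SL}_2(\mathbb{F}_l)]$-module with no nonzero invariants, so $H^0(F,Ad^0)=0$. For $H^0(F,(Ad^0)^*(1))$: as a Galois module $(Ad^0)^*$ is isomorphic to $Ad^0$ via the trace pairing (which is nondegenerate and $G_F$-equivariant because $l\neq 2$), so $(Ad^0)^*(1)\cong Ad^0(1)=Ad^0\otimes\mu_l$. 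A nonzero invariant vector would give a $G_F$-equivariant embedding $\mu_l^{-1}\hookrightarrow Ad^0$; since $Ad^0$ is irreducible of dimension $3$ over the image of $\mathrm{SL}_2(\mathbb{F}_l)$, the only way a one-dimensional subspace can be $G_F$-stable is via a character that is trivial on $\mathrm{SL}_2(\mathbb{F}_l)$, forcing the character to be unramified-abelian in nature, which cannot be the mod-$l$ cyclotomic character for $l>3$; hence $H^0(F,(Ad^0)^*(1))=0$. I would cite \cite{kwannals} Lemma 4.3 (or the standard references, e.g.\ the cohomological inputs in the proof of Serre's conjecture) for the precise argument, since the statement is literally quoted from there.

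For part 1, the key point is that the connecting map $\mathbb{F}=H^0(S,Z)\to H^1(S,Ad^0)$ has image equal to the kernel of $\eta:H^1(S,Ad^0)\to H^1(S,Ad)$. By exactness, this kernel is the image of $H^0(S,Ad)=H^0(S,Ad^0)\oplus\mathbb{F}$; by part 2 applied with $F$ replaced by the fixed field of the kernel of the $S$-ramified Galois group we get $H^0(S,Ad^0)=H^0(F,Ad^0)=0$ (the invariants only see the image, which contains $\mathrm{SL}_2(\mathbb{F}_l)$), so $H^0(S,Ad)=\mathbb{F}$, and the connecting map $\mathbb{F}\to H^1(S,Ad^0)$ is therefore the \emph{zero} map. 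Consequently $\eta$ is injective on $H^1(S,Ad^0)$, so $H^1(S,Ad^0)\to H^1(S,Ad^0)^\eta$ is an isomorphism; restricting to the subspace cut out by the local conditions $\{L_\nu\}$ gives the second isomorphism, since $\eta$ being injective on the whole group is injective on any subgroup. For part 3, the splitting $Ad=Ad^0\oplus Z$ as Galois modules is functorial in cohomology, giving $H^1(F,Ad)=H^1(F,Ad^0)\oplus H^1(F,Z)$, and the inclusion of the second summand is the desired injection $H^1(F,Z)\hookrightarrow H^1(F,Ad)$.

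The only genuinely delicate point is the vanishing $H^0(F,(Ad^0)^*(1))=0$ in part 2: unlike $H^0(F,Ad^0)=0$, which is immediate from irreducibility of $Ad^0$, the twisted version requires knowing that $Ad^0$ contains no $G_F$-stable line on which $G_F$ acts by $\mu_l^{-1}$, and this is where one really uses both $\mathrm{SL}_2(\mathbb{F}_l)\subseteq\mathrm{Im}(\overline{\rho}_{f,l})$ and $l>3$ (for $l=3$ there can be exceptional coincidences). Since this is exactly the content of \cite{kwannals} Lemma 4.3, I would present the proof as a specialization of that lemma to our situation, checking only that our running hypotheses ($l>6$, $f$ non-CM so $\mathrm{SL}_2(\mathbb{F}_l)\subseteq\mathrm{Im}(\overline{\rho}_{f,l})$ after Momose, and the determinant $\chi\epsilon_l^{k-1}$) imply the hypotheses of that lemma, rather than reproving the Galois-cohomological estimates from scratch.
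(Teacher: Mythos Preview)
Your proposal is correct and in fact supplies considerably more detail than the paper does: the paper states this lemma purely as a citation of \cite{kwannals} Lemma 4.3 and offers no proof of its own, so there is nothing to compare against beyond the reference you already invoke.

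One small streamlining: your argument for part~1 goes through $H^0(S,Ad)=\mathbb{F}$ and surjectivity of $H^0(S,Ad)\to H^0(S,Z)$ to kill the connecting map, but you could (and implicitly do, in part~3) argue more directly from the splitting $Ad=Ad^0\oplus Z$: since this is a splitting of $G_{F,S}$-modules, the long exact sequence splits degreewise, the connecting map is zero automatically, and $H^1(S,Ad^0)\hookrightarrow H^1(S,Ad)$ is just inclusion of a direct summand. This makes part~1 independent of part~2 and unifies it with your argument for part~3.
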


We will make use of the following result, which is also a particular case of Lemma 4.4 of \cite{kwannals}. We provide the proof in our setting for the sake of completeness.

\begin{prop}The minimal number of generators of $R^{\square, \mu_r}_{\mathcal{O},S}$ (analogously $\widehat{R}^{\square, \mu_r}_S$) over $\widehat{R}_S^{\square,loc, \mu_r}$ (analogously $R^{\square,loc, \mu_r}_S$) is 
$$
g:=dim_{\mathbb{F}}(H^1_{\{L_{\nu}\}}(S,Ad^0))+\sum_{\nu\in S}dim_{\mathbb{F}}(H^0(D_{\nu},Ad))-dim_{\mathbb{F}}(H^0(F,Ad)).
$$
\label{ngen}
\end{prop}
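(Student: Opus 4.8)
The plan is to compute the minimal number of generators of $R^{\square,\mu_r}_{\mathcal{O},S}$ as a $\widehat{R}_S^{\square,loc,\mu_r}$-algebra by the standard presentation-theoretic recipe: by complete Nakayama this number equals the dimension over $\mathbb{F}$ of the relative (mod $\mathfrak{m}$) cotangent space, i.e. of the reduced tangent space of the morphism $\mathrm{Specf}(R^{\square,\mu_r}_{\mathcal{O},S}) \to \mathrm{Specf}(\widehat{R}_S^{\square,loc,\mu_r})$ at the closed point. Concretely, I would identify $\mathrm{Hom}_{\mathcal{O}}(R^{\square,\mu_r}_{\mathcal{O},S},\mathbb{F}[\varepsilon])$ with the set of framed global deformations to the dual numbers with fixed determinant $\mu_r$, which is a torsor (or in the split case a vector space) whose dimension is $\dim_{\mathbb{F}} Z^1(S,Ad^0) + \sum_{\nu\in S}\dim_{\mathbb{F}}(\mathrm{GL}_2)_1 $-type framing contributions, and similarly describe $\mathrm{Hom}_{\mathcal{O}}(\widehat{R}_S^{\square,loc,\mu_r},\mathbb{F}[\varepsilon])$ in terms of the local framed cocycles $\bigoplus_{\nu\in S} Z^1(D_\nu,Ad^0)$ together with the same framing variables. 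The relative tangent space is then the cokernel of the composite map from global framed deformations to $\bigoplus_{\nu\in S} Z^1(D_\nu,Ad^0)$ modulo local coboundaries, and the framing variables cancel identically between source and target.

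The heart of the computation is therefore to count the cokernel of the localization map on cocycles, after quotienting the target by local coboundaries. First I would organize this via the Poitou--Tate exact sequence for $Ad^0 = Ad^0(\overline{\rho}_{f,l})$ with the Selmer conditions $\{L_\nu\}$ of Case 1: the relevant piece reads
\begin{equation}
0 \to H^1_{\{L_\nu\}}(S,Ad^0) \to H^1(S,Ad^0) \to \bigoplus_{\nu\in S} H^1(D_\nu,Ad^0)/L_\nu \to H^1_{\{L_\nu^\perp\}}(S,(Ad^0)^*(1))^\vee \to \cdots
\end{equation}
together with the Euler characteristic formula for $Ad^0$ over $\mathbb{Q}$ and the vanishing $H^0(F,(Ad^0)^*(1)) = 0$ from Lemma \ref{lema43}(2). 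Since $l > 6$ and $\mathrm{SL}_2(\mathbb{F}_l) \subseteq \mathrm{Im}(\overline{\rho}_{f,l})$, the local conditions $L_\nu$ have dimension $0$ (stated in Case 1), so $H^1_{\{L_\nu\}}(S,Ad^0)$ is just the subspace of classes unramified in the prescribed sense; tracking the boundary terms and the global Euler characteristic, the cokernel of the cocycle localization map modulo local coboundaries comes out to $\dim_{\mathbb{F}} H^1_{\{L_\nu\}}(S,Ad^0) + \sum_{\nu\in S}\dim_{\mathbb{F}} H^0(D_\nu,Ad^0) - \dim_{\mathbb{F}} H^0(F,Ad^0)$. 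The final step is to pass from $Ad^0$ to $Ad$: using the decomposition $Ad = Ad^0 \oplus Z$ valid for $l>6$, the extra scalar part $Z$ contributes $\dim_{\mathbb{F}} H^0(D_\nu,Z) = 1$ at each $\nu\in S$ and $\dim_{\mathbb{F}} H^0(F,Z)=1$ globally, and these are precisely the corrections that turn $H^0(D_\nu,Ad^0)$ into $H^0(D_\nu,Ad)$ and $H^0(F,Ad^0)$ into $H^0(F,Ad)$ in the formula; one also checks, via Lemma \ref{lema43}(1) and (3), that replacing $H^1_{\{L_\nu\}}(S,Ad^0)$ by its image $H^1_{\{L_\nu\}}(S,Ad^0)^\eta$ in $H^1(S,Ad)$ does not change the dimension, so the stated value of $g$ may equivalently be read inside $H^1(S,Ad)$.

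I expect the main obstacle to be the careful bookkeeping of the torsor/coboundary contributions in the relative tangent space: one must be scrupulous that the framing variables $g_\nu \in (\mathrm{GL}_2)_1(\mathbb{F}[\varepsilon])$ appear with exactly matching multiplicities on the global and local sides so that they cancel, and that the map $\bigoplus_\nu Z^1(D_\nu,Ad^0) \to \bigoplus_\nu H^1(D_\nu,Ad^0)/L_\nu$ has the kernel one expects (namely $\bigoplus_\nu(B^1(D_\nu,Ad^0) + \widetilde{L}_\nu)$ for suitable lifts $\widetilde{L}_\nu$). Once the Poitou--Tate sequence is in hand, the numerics are forced, but getting the sequence to interface correctly with the framed (rather than unframed) deformation problem, and with the fixed-determinant constraint, is where the real content lies. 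Since this is verbatim the situation of \cite{kwannals} Lemma 4.4 with only the local condition at $l$ altered — and that alteration affects neither the cohomology of $Ad^0$ at $l$ that enters the count nor the Euler characteristic — I would lean on that argument for the cancellation bookkeeping and only reprove the pieces ($L_l$-dimension, non-vanishing of the relevant $H^0$'s) that are special to our $\mathrm{GL}_2$, $l>6$ setting.
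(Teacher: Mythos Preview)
Your plan conflates two distinct steps of the paper. The proposition at hand is a direct relative tangent space calculation; the Poitou--Tate sequence and Wiles' Euler characteristic formula are not used here at all --- they appear only \emph{afterwards}, in the proof of Theorem \ref{cotadimension}, where the expression for $g$ is rewritten in dual form to bound the number of relations. Invoking them for the present statement is unnecessary and is what leads you to describe the relative tangent space as a ``cokernel of the localization map''. It is not: by Nakayama the minimal number of generators is the dimension of the \emph{fibre} of $\mathrm{Hom}(R^{\square,\mu_r}_{\mathcal{O},S},\mathbb{F}[\varepsilon])\to \mathrm{Hom}(\widehat{R}_S^{\square,loc,\mu_r},\mathbb{F}[\varepsilon])$ over the trivial point, i.e.\ the set of $(\rho,\{\beta_\nu\})$ such that each $(\rho|_{D_\nu},\beta_\nu)$ is literally the constant lift.

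Relatedly, your claim that ``the framing variables cancel identically between source and target'' is wrong and is exactly where your bookkeeping goes astray. The local framed tangent space at $\nu$ is simply $Z^1(D_\nu,Ad^0)$ --- it carries no separate framing variable --- so there is nothing on the target side for the global framings to cancel against. On the contrary, the framings $\beta_\nu$ are precisely what produce the $H^0$ terms in the formula: once $\rho$ is fixed with $[\rho|_{D_\nu}]\in L_\nu$ (so that $[\rho]\in H^1_{\{L_\nu\}}(S,Ad^0)$), the set of $\beta_\nu$ trivializing $\rho|_{D_\nu}$ is an $H^0(D_\nu,Ad)$-torsor, and the residual global equivalence (conjugation by $\mathrm{GL}_2(\mathbb{F}[\varepsilon])_1$ commuting with $\rho$) has dimension $\dim_{\mathbb{F}}H^0(F,Ad)$. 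This is the paper's argument, following Kisin (\cite{kisinannals}, Lemma 3.2.2): the three summands in $g$ correspond, in order, to the locally-trivial deformation class, the choice of local trivializations, and the global equivalence --- no Poitou--Tate, no Euler characteristic, no localization cokernel. Your passage from $Ad^0$ to $Ad$ via Lemma \ref{lema43} is correct but is already a side remark, not the mechanism by which the $H^0(D_\nu,Ad)$ terms enter.
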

\begin{proof}Denote by $\frak{m}$ the maximal ideal of $R^{\square, \mu_r}_{\mathcal{O},S}$. It is enough to prove that the dimension of the relative tangent space $T:=\mathrm{Hom}_{\mathcal{O}}(\frak{m}/\frak{m}^2\otimes_{\widehat{R}_S^{\square,loc,\mu_r}}\mathbb{F},\mathbb{F})$ is $g$.

First, notice that an element in $T$ corresponds to an infinitesimal deformation $\rho$ of $\overline{\rho}_{f,l}$ to $GL_2(\mathbb{F}[\varepsilon])$ with fixed determinant $\mu_r$, together with $\{\beta_{\nu}\}_{\nu\in S}$ such that for each $\nu\in S$, $\beta_{\nu}$ lifts the prescribed basis $\beta$ of $\overline{\rho}_{f,l}$ and $(\rho|_{D_{\nu}},\beta_{\nu})\cong \overline{\rho}_{f,l}\otimes_{\mathbb{F}}\mathbb{F}[\varepsilon]$. The space of  such deformations is given by  $H^1_{\{L_{\nu}\}}(S,Ad^0)^{\eta}$, whose dimension is $dim_{\mathbb{F}}(H^1_{\{L_{\nu}\}}(S,Ad^0))$ by Lemma \ref{lema43}.

Now, we argue as in Lemma 3.2.2 of \cite{kisinannals}: given an infinitesimal deformation $\rho$  as in the previous paragraph, the space of possible choices for the basis $\{\beta_{\nu}\}$ is given by $H^0(F_{\nu},Ad)$. Finally, two sets of choices of bases $\{\beta_{\nu}\}_{\nu\in S}$ and $\{\beta'_{\nu}\}_{\nu\in S}$ are equivalent if there exists $\phi\in GL_2(\mathbb{F}[\varepsilon])$, commuting with the $D_{\nu}$ actions which reduce to a homothety on $\mathbb{F}^2$ and brings $\beta_{\nu}$ to $\beta'_{\nu}$ for each $\nu$, hence the result follows.
\end{proof}

\begin{lem}[Wiles, see \cite{kwannals} 4.1.16] With notations as above, it holds:
\begin{equation}
\frac{|H^1_{\{L_{\nu}\}}(S,Ad^0)|}{|H^1_{\{L_{\nu}^{*}\}}(S,(Ad^0)^*(1))}=\frac{|H^0(F,Ad^0)|}{|H^0(F,(Ad^0)^*(1))}\prod_{\nu\in S}\frac{1}{H^0(D_{\nu},Ad^0)}.
\end{equation}
\label{wiles}
\end{lem}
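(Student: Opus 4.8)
The asserted identity is the \emph{Greenberg--Wiles formula} (Wiles' formula), and the plan is to deduce it from three classical inputs applied to the finite $G_{F,S}$-module $M=Ad^0(\overline{\rho}_{f,l})$ and its Cartier dual $M^{*}:=(Ad^0)^{*}(1)$: the Poitou--Tate nine-term exact sequence, local Tate duality, and the local and global Euler characteristic formulas. Recall that Poitou--Tate duality provides the exact sequence
\[
0\to H^0(S,M)\to\prod_{\nu\in S}H^0(D_\nu,M)\to H^2(S,M^{*})^{\vee}\to H^1(S,M)\xrightarrow{\ \mathrm{res}\ }\prod_{\nu\in S}H^1(D_\nu,M)\to H^1(S,M^{*})^{\vee}\to H^2(S,M)\to\prod_{\nu\in S}H^2(D_\nu,M)\to H^0(S,M^{*})^{\vee}\to 0,
\]
where $(-)^{\vee}$ denotes Pontryagin dual and at archimedean $\nu$ the groups are Tate cohomology groups; note also that $H^0(S,M)=H^0(F,M)$ and $H^0(S,M^{*})=H^0(F,M^{*})$ since the actions factor through $G_{F,S}$.

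Next I would splice this with the definitions of the Selmer and dual Selmer groups. By construction $H^1_{\{L_\nu\}}(S,M)=\ker\big(H^1(S,M)\to\prod_{\nu\in S}H^1(D_\nu,M)/L_\nu\big)$, and under the perfect local Tate pairings $H^1(D_\nu,M)\times H^1(D_\nu,M^{*})\to\Q/\Z$ the subspaces $L_\nu$ and $L_\nu^{*}=L_\nu^{\perp}$ are exact annihilators of one another, so that in particular $|H^1(D_\nu,M)/L_\nu|=|L_\nu^{*}|$ and $|H^i(D_\nu,M)|=|H^{2-i}(D_\nu,M^{*})|$ for finite $\nu$. Combining the nine-term sequence with these local identities --- this is the standard derivation of the Greenberg--Wiles exact sequence --- one obtains a finite exact sequence all of whose terms lie among $H^1_{\{L_\nu\}}(S,M)$, $H^1_{\{L_\nu^{*}\}}(S,M^{*})$, the global groups $H^i(S,M)$ and $H^i(S,M^{*})$, and the local groups $H^0(D_\nu,M)$ and $H^0(D_\nu,M^{*})$.

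Finally I would take the alternating product of orders along that exact sequence --- which equals $1$ --- solve for the ratio $|H^1_{\{L_\nu\}}(S,M)|\,/\,|H^1_{\{L_\nu^{*}\}}(S,M^{*})|$, and substitute the local Euler characteristic formula at each $\nu\in S$ (trivial for finite $\nu\nmid l$, a power of $|M|$ at $\nu\mid l$, and the Tate-cohomology correction at $\nu\mid\infty$) together with Tate's global Euler characteristic formula, which for $M$ of $l$-power order over the number field $F$ reads
\[
\frac{|H^0(S,M)|\,|H^2(S,M)|}{|H^1(S,M)|}=\Big(\prod_{\nu\mid\infty}|H^0(D_\nu,M)|\Big)\cdot|M|^{-[F:\Q]}
\]
up to the standard archimedean Tate correction. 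In our situation $M=Ad^0(\overline{\rho}_{f,l})$ has $l$-power order, $S\supseteq\{l,\infty\}$, and --- the simplification specific to Case~1, valid because $l>6$ --- one has $L_\nu=0$ for every $\nu\in S$, so $|L_\nu|=1$; the powers of $|M|$ contributed by the global formula and by the local formulas at the places above $l$ cancel, the archimedean contributions collapse to the factors $|H^0(D_\nu,M)|$, and what remains is exactly
\[
\frac{|H^1_{\{L_\nu\}}(S,Ad^0)|}{|H^1_{\{L_\nu^{*}\}}(S,(Ad^0)^{*}(1))|}=\frac{|H^0(F,Ad^0)|}{|H^0(F,(Ad^0)^{*}(1))|}\prod_{\nu\in S}\frac{1}{|H^0(D_\nu,Ad^0)|}.
\]
The one genuinely delicate point is the bookkeeping of the normalizing factors in the two Euler characteristic formulas --- the powers of $|M|$ and the archimedean Tate corrections --- and verifying that they cancel; since this identity is invoked only in the displayed form and $\overline{\rho}_{f,l}$ has image containing $\mathrm{SL}_2(\mathbb{F}_l)$ (so the right-hand side is in any case completely pinned down by Lemma~\ref{lema43}), one may alternatively just quote it from \cite{kwannals}, 4.1.16.
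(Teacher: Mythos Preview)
Your proposal is correct: what you have sketched is precisely the standard derivation of the Greenberg--Wiles formula, and your final line even lands on the paper's own ``proof,'' which consists entirely of the citation to \cite{kwannals}, Section~4.1.6, Equation~(2), specialized to $V=\emptyset$ and $l>2$.

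The only difference is one of scope. The paper treats this lemma as a black-box citation and does not reproduce any of the Poitou--Tate / Euler-characteristic bookkeeping; you instead outline that bookkeeping and then note the citation as an alternative. Your sketch is accurate --- in particular you correctly identify that the general formula carries a factor $\prod_{\nu\in S}|L_\nu|$ in the numerator and that the paper's Case~1 hypothesis (the image of $H^0(D_\nu,Ad/Ad^0)$ in $H^1(D_\nu,Ad^0)$ vanishes for $l>6$) forces $|L_\nu|=1$, which is exactly why the displayed identity has no $|L_\nu|$ on the right. What your elaboration buys is self-containment; what the paper's bare citation buys is brevity, since the formula is used only once (in Theorem~\ref{cotadimension}) and only to rewrite $g$.
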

\begin{proof}This is Equation (2) in Section 4.1.6 of \cite{kwannals}, with $V=\emptyset$ and $l>2$, as in our case.
\end{proof}

Next we prove the technical core of this paper.

\begin{thm}Notations as before, it holds
$$
dim(\mathcal{R}_{k_r, global})\geq 1.
$$
\label{cotadimension}
\end{thm}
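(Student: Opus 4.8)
The plan is to bound $\dim(\mathcal{R}_{k_r,global})$ from below by combining the torsor description of Proposition \ref{dim2}, the flatness and dimension count of Proposition \ref{dim1}, and the Galois-cohomological generator count of Proposition \ref{ngen}, in the style of \cite{kwannals} Proposition 4.5. First I would recall the two key relations between the rings in play: on one hand, by Proposition \ref{dim2}, $\widehat{R}^{\square,\mu_r}_S$ is a power series ring over $\mathcal{R}_{k_r,global}$ in $4|S|-1$ variables, so
\begin{equation}
\dim(\mathcal{R}_{k_r,global}) = \dim(\widehat{R}^{\square,\mu_r}_S) - (4|S|-1).
\end{equation}
On the other hand, $\widehat{R}^{\square,\mu_r}_S = R^{\square,\mu_r}_{\mathcal{O},S}\widehat{\otimes}_{\widehat{R}^{\square,loc,\mu_r}_S} R^{\square,loc,\mu_r}_S$, so by Proposition \ref{ngen} it is a quotient of a power series ring over $R^{\square,loc,\mu_r}_S$ in $g$ variables, where $g = \dim_{\mathbb{F}}(H^1_{\{L_\nu\}}(S,Ad^0)) + \sum_{\nu\in S}\dim_{\mathbb{F}}(H^0(D_\nu,Ad)) - \dim_{\mathbb{F}}(H^0(F,Ad))$. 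Hence
\begin{equation}
\dim(\widehat{R}^{\square,\mu_r}_S) \geq \dim(R^{\square,loc,\mu_r}_S) - g,
\end{equation}
provided I can control the number of relations; here I would invoke that, after possibly enlarging $\mathcal{O}$, the local ring $R^{\square,loc,\mu_r}_S$ is $\mathcal{O}$-flat of relative dimension $3|S|$ by Proposition \ref{dim1}, so $\dim(R^{\square,loc,\mu_r}_S) = 3|S| + 1$, and the presentation as a quotient of a power series ring in $g$ variables over an $(3|S|+1)$-dimensional ring is efficient enough (this is the point where one must check, as in \cite{kwannals}, that the number of relations needed is bounded by an appropriate $H^2$ or dual $H^1$ term).

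Next I would assemble the inequality. Combining the two displays,
\begin{equation}
\dim(\mathcal{R}_{k_r,global}) \geq (3|S|+1) - g - (4|S|-1) = 2 - |S| - g,
\end{equation}
and then I substitute the value of $g$:
\begin{equation}
\dim(\mathcal{R}_{k_r,global}) \geq 2 - |S| - \dim_{\mathbb{F}} H^1_{\{L_\nu\}}(S,Ad^0) - \sum_{\nu\in S}\dim_{\mathbb{F}} H^0(D_\nu,Ad) + \dim_{\mathbb{F}} H^0(F,Ad).
\end{equation}
The crucial input is now Wiles' formula, Lemma \ref{wiles}, which lets me replace $\dim_{\mathbb{F}} H^1_{\{L_\nu\}}(S,Ad^0)$ by $\dim_{\mathbb{F}} H^1_{\{L_\nu^*\}}(S,(Ad^0)^*(1)) + \dim_{\mathbb{F}} H^0(F,Ad^0) - \dim_{\mathbb{F}} H^0(F,(Ad^0)^*(1)) - \sum_{\nu\in S}\dim_{\mathbb{F}} H^0(D_\nu,Ad^0)$; by part 2 of Lemma \ref{lema43} the two global $H^0$ terms on the right vanish. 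Moreover, since $l>6$ we have $Ad = Ad^0\oplus Z$ with $Z$ the scalars, so $\dim_{\mathbb{F}} H^0(D_\nu,Ad) = \dim_{\mathbb{F}} H^0(D_\nu,Ad^0) + \dim_{\mathbb{F}} H^0(D_\nu,Z) = \dim_{\mathbb{F}} H^0(D_\nu,Ad^0) + 1$ (the scalars are always $D_\nu$-invariant), and similarly $\dim_{\mathbb{F}} H^0(F,Ad) = \dim_{\mathbb{F}} H^0(F,Ad^0) + 1 = 1$ again using Lemma \ref{lema43}(2). Feeding all of this in, the $\sum_{\nu\in S}\dim_{\mathbb{F}} H^0(D_\nu,Ad^0)$ terms cancel, each $\nu\in S$ contributes a $-1$ from $H^0(D_\nu,Z)$ that cancels against the $-|S|$, and one is left with
\begin{equation}
\dim(\mathcal{R}_{k_r,global}) \geq 2 - \dim_{\mathbb{F}} H^1_{\{L_\nu^*\}}(S,(Ad^0)^*(1)) + 1 - 1 = 2 - \dim_{\mathbb{F}} H^1_{\{L_\nu^*\}}(S,(Ad^0)^*(1)).
\end{equation}

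Finally, it remains to show $\dim_{\mathbb{F}} H^1_{\{L_\nu^*\}}(S,(Ad^0)^*(1)) \leq 1$, which will give $\dim(\mathcal{R}_{k_r,global})\geq 1$. This is exactly where the large-image hypothesis $\mathrm{SL}_2(\mathbb{F}_l)\subseteq\mathrm{Im}(\overline{\rho}_{f,l})$ is used: combined with $l>6$, it forces $(Ad^0)^*(1)$ to be an absolutely irreducible $G_{F,S}$-module with no invariants (as recorded in Lemma \ref{lema43}(2)), and the dual Selmer group $H^1_{\{L_\nu^*\}}(S,(Ad^0)^*(1))$ is then controlled — following the argument of \cite{kwannals} Proposition 4.5 — by choosing the dual local conditions $L_\nu^*$ (the annihilators of the $L_\nu$ under local Tate duality) large enough that the dual Selmer group is at most one-dimensional; in \cite{kwannals} this cardinality bound comes from a Chebotarev/auxiliary-prime analysis together with the Greenberg--Wiles formula, but in our setting, since we do not need to add ramification (the set $V$ is empty), it reduces to a direct estimate. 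I expect this last step — pinning down $\dim_{\mathbb{F}} H^1_{\{L_\nu^*\}}(S,(Ad^0)^*(1))\leq 1$ cleanly from the local conditions we have chosen at $l$, at $\infty$ and at the bad primes — to be the main obstacle, since it requires verifying that the potentially-diagonalizable crystalline condition at $l$ from Lemma \ref{firstlift} cuts out a local subspace $L_l$ of the correct dimension so that the Wiles formula bookkeeping closes up, exactly as the minimal-at-$l$ condition does in \cite{kwannals}.
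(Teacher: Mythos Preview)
Your overall architecture is right (torsor relation, local dimension count, presentation over the local ring, Wiles formula), but the decisive step is miscomputed and the endgame is aimed at the wrong target.

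First, the inequality $\dim(\widehat{R}^{\square,\mu_r}_S)\geq \dim(R^{\square,loc,\mu_r}_S)-g$ is not the right one. If $\widehat{R}^{\square,\mu_r}_S\cong R^{\square,loc,\mu_r}_S[[X_1,\ldots,X_g]]/J'$ with $J'$ generated by $r(J')$ elements, then Krull's principal ideal theorem gives
\[
\dim(\widehat{R}^{\square,\mu_r}_S)\ \geq\ \dim(R^{\square,loc,\mu_r}_S)+g-r(J').
\]
You have dropped the $+g$ and, in effect, replaced $r(J')$ by $g$ without justification. This sign slip is what forces you, at the end, to try to prove $\dim_{\mathbb{F}}H^1_{\{L_\nu^*\}}(S,(Ad^0)^*(1))\leq 1$. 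That bound is neither needed nor, in general, available: the dual Selmer group can be large, and no Chebotarev/auxiliary-prime argument applies here because you are \emph{not} enlarging $S$.

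The paper's actual mechanism is that the dual Selmer dimension \emph{cancels} between $g$ and $r(J')$. Using Wiles's formula and $Ad=Ad^0\oplus Z$ one finds $g=\dim_{\mathbb{F}}H^1_{\{L_\nu^*\}}(S,(Ad^0)^*(1))+|S|-1$, and the key technical lemma (Lemma~\ref{techlemma}, proved via a Poitou--Tate pairing argument constructing an injection $\mathrm{Hom}_{\mathbb{F}}(J/\mathfrak{m}J,\mathbb{F})\hookrightarrow H^1_{\{L_\nu^*\}}(S,(Ad^0)^*(1))^*$) gives $r(J')\leq \dim_{\mathbb{F}}H^1_{\{L_\nu^*\}}(S,(Ad^0)^*(1))$. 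Hence $g-r(J')\geq |S|-1$, so
\[
\dim(\widehat{R}^{\square,\mu_r}_S)\ \geq\ (3|S|+1)+(|S|-1)=4|S|,
\]
and then Proposition~\ref{dim2} yields $\dim(\mathcal{R}_{k_r,global})\geq 4|S|-(4|S|-1)=1$. So the missing idea is precisely the relation bound $r(J')\leq \dim_{\mathbb{F}}H^1_{\{L_\nu^*\}}(S,(Ad^0)^*(1))$; once you insert it, there is nothing further to prove about the size of the dual Selmer group, and your last paragraph can be deleted entirely.
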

\begin{proof}
Let $g=dim_{\mathbb{F}}(H^1_{\{L_{\nu}\}}(S,Ad^0))+\sum_{\nu\in S}dim_{\mathbb{F}}(H^0(D_{\nu},Ad))-dim_{\mathbb{F}}(H^0(F,Ad))$ be, as in Lemma \ref{ngen}, the minimal number of generators of $R^{\square}_{\mathcal{O},S}$ over $\widehat{R}_S^{\square,loc}$. By Wiles's Lemma \ref{wiles} and Lemma \ref{lema43} (2), this can be written as
$$
dim_{\mathbb{F}}(H^1_{\{L_{\nu}^*\}}(S,(Ad^0)^*(1)))-dim_{\mathbb{F}}(H^0(F,Ad))+\sum_{\nu\in S}(dim_{\mathbb{F}}(H^0(D_{\nu},Ad))-dim(H^0(D_{\nu,Ad^0}))).
$$
Considering the exact sequence
$$
0\to H^0(D_{\nu},Ad^0)\to H^0(D_{\nu},Ad)\to\mathbb{F}\to 0,
$$
we have
\begin{equation}
g=dim_{\mathbb{F}}(H^1_{\{L_{\nu}^*\}}(S,(Ad^0)^*(1)))+|S|-1.
\label{generadores}
\end{equation}
Hence, we have a presentation
$$
R^{\square, \mu_r}_{\mathcal{O},S}\cong \widehat{R}_S^{\square,loc, \mu_r}[[X_1,...,X_g]]/J 
$$
which induces an isomorphism on the relative to $\widehat{R}_S^{\square,loc, \mu_r}$ tangent spaces of $\widehat{R}_S^{\square,loc, \mu_r}[[X_1,...,X_g]]$ and $R^{\square, \mu_r}_{\mathcal{O},S}$, since the dimension of both tangent spaces is $g$, as seen in the proof of Proposition \ref{ngen}. Denote by $r(J)$ the minimal number of generators of $J$, so that $r(J)=dim(J/\frak{m}J)$, where $\frak{m}$ is the maximal ideal of $\widehat{R}_S^{\square,loc, \mu_r}[[X_1,...,X_g]]$. 

Next, as we will prove apart in Lemma \ref{techlemma}, we observe that $r(J)\leq dim_{\mathbb{F}}(H^1_{\{L_{\nu}^*\}}(S,(Ad^0)^*(1)))$. And now, the rest of the proof is exactly as in Prop. 4.5 of \cite{kwannals}: 

We start with the presentation
$$
R^{\square, \mu_r}_{\mathcal{O},S}\cong \widehat{R}_S^{\square,loc, \mu_r}[[X_1,...,X_g]]/J.
$$
with it and the natural maps $\widehat{R}_S^{\square,loc, \mu_r}\to R_S^{\square,loc, \mu_r}$ and $R_{\mathcal{O},S}^{\square, \mu_r}\to \widehat{R}_S^{\square, \mu_r}$ we deduce a presentation
$$
\widehat{R}_S^{\square, \mu_r}\cong  R_S^{\square,loc, \mu_r}[[X_1,...X_g]]/J',
$$
where $J'$ is generated by at most $dim_{\mathbb{F}}(H^1_{\{L_{\nu}^*\}}(S,(Ad^0)^*(1)))$ elements.

Hence, denoting by dim the absolute dimension of a ring, we have:
$$
dim(\widehat{R}_S^{\square, \mu_r})\geq dim(R_S^{\square,loc, \mu_r})+g-dim_{\mathbb{F}}(H^1_{\{L_{\nu}^*\}}(S,(Ad^0)^*(1))), 
$$
which due to Equation \ref{generadores} is lower bounded by $dim(R_S^{\square,loc, \mu_r})+|S|-1$, which equals $4|S|$, by Proposition \ref{dim1} ($\mathcal{O}$ has absolute dimension $1$).

To conclude the proof, we apply Poposition \ref{dim2}:
$$
dim(\widehat{R}_S^{\square, \mu_r})=dim(\mathcal{R}_{k_r, global})+4|S|-1\geq 4|S|.
$$
and the result holds.
\end{proof}

We are now left to prove the following technical result:

\begin{lem}Notations as before, it holds:
$$
r(J)\leq dim_{\mathbb{F}}(H^1_{\{L_{\nu}^*\}}(S,(Ad^0)^*(1))).
$$
\label{techlemma}
\end{lem}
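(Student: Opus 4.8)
The plan is to bound the number of relations $r(J)$ by a Galois-cohomological obstruction group, following the standard framed-deformation obstruction theory as carried out in \cite{kwannals} Section 4. Recall that $R^{\square,\mu_r}_{\mathcal{O},S}$ is the framed deformation ring over the complete local ring $\widehat{R}^{\square,loc,\mu_r}_S$, and we have a presentation $R^{\square,\mu_r}_{\mathcal{O},S}\cong \widehat{R}^{\square,loc,\mu_r}_S[[X_1,\dots,X_g]]/J$ which is minimal on relative tangent spaces, so that $r(J)=\dim_{\mathbb{F}}(J/\mathfrak{m}J)$ computes the minimal number of relations. By Nakayama and the standard argument (as in \cite{kisinannals} Lemma 3.2.2, or \cite{kwannals} 4.1), $r(J)$ is bounded above by the dimension of the relative obstruction space, which is the Selmer-type group measuring obstructions to lifting framed deformations from a square-zero quotient compatibly with the prescribed local framed conditions at each $\nu\in S$.

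First I would set up the obstruction class: given a small extension $A'\twoheadrightarrow A$ of CNL-$\mathcal{O}$-algebras with kernel $I$ annihilated by $\mathfrak{m}_{A'}$, and a framed deformation $(\rho_A,\{\beta_{A,\nu}\})$ satisfying all local conditions, the obstruction to lifting $\rho_A$ to $A'$ lies in $H^2(G_{F,S},Ad^0\otimes I)$ (fixed-determinant forces $Ad^0$ rather than $Ad$, and $l>6$ makes $Ad=Ad^0\oplus Z$ split), while the framings $\beta_{A,\nu}$ can always be lifted since $\mathrm{GL}_2$ is smooth — this is exactly why the local framed conditions do not contribute further relations. Thus the relevant obstruction group is the image of the global $H^2$, and more precisely the relations are controlled by the cokernel of the localization map into the local obstruction spaces; since each local ring $R^{\square,\mu_r}_{\mathcal{O},\nu}$ is formally smooth over $\mathcal{O}$ in its relative sense after the framings are accounted for (being $\mathcal{O}$-flat domains of the expected dimension, by the preceding Proposition), the local $H^2$-contributions vanish and only the global-over-local piece survives. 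By Poitou--Tate duality this is identified with $H^1_{\{L^*_\nu\}}(S,(Ad^0)^*(1))$, where $L^*_\nu$ is the annihilator of $L_\nu$ under local duality.

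Concretely the key steps, in order, are: (1) reduce via Nakayama to showing $\dim_{\mathbb{F}}(J/\mathfrak{m}J)\le \dim_{\mathbb{F}} \mathcal{H}$ for the appropriate obstruction group $\mathcal{H}$, using that $J\subseteq \mathfrak{m}^2$ and the tangent-space computation from Proposition \ref{ngen}; (2) identify $\mathcal{H}$ with the kernel of the map $H^2(G_{F,S},Ad^0)\to \bigoplus_{\nu\in S} H^2(D_\nu,Ad^0)$ — i.e., the Tate--Shafarevich group $\Sha^2_S(Ad^0)$ — using that the local deformation problems are unobstructed (formally smooth) so that the local $H^2$'s impose no relations beyond what the framing absorbs; (3) apply the Poitou--Tate nine-term exact sequence to get $\Sha^2_S(Ad^0)\cong \Sha^1_S((Ad^0)^*(1))^\vee$, and observe $\Sha^1_S((Ad^0)^*(1))\subseteq H^1_{\{L^*_\nu\}}(S,(Ad^0)^*(1))$ (the strict Selmer group is contained in the one with the dual local conditions, since $L^*_\nu\supseteq 0$); (4) conclude $r(J)\le \dim_{\mathbb{F}} H^1_{\{L^*_\nu\}}(S,(Ad^0)^*(1))$. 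I would note that the computation of $H^0(D_\nu,\cdot)$ terms and the duality bookkeeping mirror \cite{kwannals} Lemmas 4.3--4.4 exactly, with $V=\emptyset$ in our situation.

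The main obstacle I anticipate is step (2): verifying that each local framed deformation ring in Definition \ref{defnlocal} is \emph{formally smooth} in the relative sense needed to kill the local obstruction contributions — that is, that the local conditions cut out smooth loci so the relation count is purely global. For $\nu=l$ this is the content of \cite{allen} Corollary 3.3.3 (the crystalline framed deformation ring of regular Hodge--Tate weights in the Fontaine--Lafaille/appropriate range is formally smooth over $\mathcal{O}$, or at least the connected component through $\rho_{k_r,l}$ is), for $\nu=\infty$ and the finite places it follows from \cite{kwannals} Section 3, but one must be careful that ``formally smooth'' is exactly the hypothesis that makes the local obstruction subspace equal to zero inside $H^2(D_\nu,Ad^0)$ rather than merely the tangent-space count matching. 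Once the formal smoothness of all local deformation rings is in hand, the rest is a formal diagram chase with Poitou--Tate, and we obtain the stated bound, which plugged back into Theorem \ref{cotadimension} yields $\dim(\mathcal{R}_{k_r,global})\ge 1$.
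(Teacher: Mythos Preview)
Your approach has a genuine gap at step (2). You infer that each local framed ring $R^{\square,\mu_r}_{\mathcal{O},\nu}$ is formally smooth over $\mathcal{O}$ from its being an ``$\mathcal{O}$-flat domain of the expected dimension'', but this implication is false in general (e.g.\ $\mathcal{O}[[X,Y]]/(X^2+lY)$ is an $\mathcal{O}$-flat domain of relative dimension $1$ with regular generic fibre, yet has non-reduced special fibre). The result of Allen you cite gives only $\mathcal{O}$-flatness, reducedness, and regularity of the generic fibre $R^{\square,\mu_r}_{\mathcal{O},l}[1/l]$ --- not formal smoothness of the integral ring. Since $k_r\equiv k\pmod{l^2-1}$ is typically far outside the Fontaine--Laffaille range, there is no reason to expect the crystalline ring at $l$ to be formally smooth, so the local $H^2$-contributions need not vanish, the relative obstruction space is not $\Sha^2_S(Ad^0)$, and your chain $r(J)\leq\dim\Sha^2_S(Ad^0)=\dim\Sha^1_S((Ad^0)^*(1))\leq\dim H^1_{\{L^*_\nu\}}$ breaks at its first step.

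The paper (following \cite{kwannals}, proof of Proposition~4.4) bypasses this issue entirely: rather than computing an obstruction group, it constructs an explicit $\mathbb{F}$-bilinear pairing
\[
H^1_{\{L^*_\nu\}}(S,(Ad^0)^*(1))\times \mathrm{Hom}_{\mathbb{F}}(J/\mathfrak{m}J,\mathbb{F})\longrightarrow\mathbb{F}
\]
and proves it is non-degenerate on the right. Given $u\in\mathrm{Hom}_{\mathbb{F}}(J/\mathfrak{m}J,\mathbb{F})$, one pushes out the sequence $0\to J/\mathfrak{m}J\to \widehat{R}^{\square,loc,\mu_r}_S[[X_1,\dots,X_g]]/\mathfrak{m}J\to R^{\square,\mu_r}_{\mathcal{O},S}\to 0$ along $u$ to obtain a ring $R_u$. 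The key point, which replaces your smoothness hypothesis, is that $R_u$ is itself an $\widehat{R}^{\square,loc,\mu_r}_S$-algebra, so the local representations $\rho_\nu$ lift to $\mathrm{GL}_2(R_u)$ \emph{via the structure map}, regardless of whether the local rings are formally smooth. Comparing these with a set-theoretic global lift $\tilde\rho$ yields a $2$-cocycle $c$ and local $1$-cochains $a_\nu$ with $\delta(a_\nu)=c|_{D_\nu}$; the pairing is $([x],u)=\sum_{\nu\in S}\mathrm{inv}\bigl((x_\nu\cup a_\nu)+z_\nu\bigr)$. If $u\neq 0$ pairs to zero with every class, then $[c]$ is Poitou--Tate orthogonal to $\Sha^1$, hence vanishes in $\Sha^2$, so $\tilde\rho$ may be taken to be a homomorphism and one gets a section of $R_u\to R^{\square,\mu_r}_{\mathcal{O},S}$, contradicting $I_u\cong\mathbb{F}\neq 0$.
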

\begin{proof}The strategy is to define an injective $\mathbb{F}$-linear map $f:Hom_{\mathbb{F}}(J/\frak{m}J,\mathbb{F})\to H^1_{\{ L_{\nu}^*\}}(S,(Ad^0)^*(1))^*$. For this, we will define an $\mathbb{F}$-bilinear pairing
$$
(\phantom{d},\phantom{d}):H^1_{\{ L_{\nu}^*\}}(S,(Ad^0)^*(1))\times Hom_{\mathbb{F}}(J/\frak{m}J,\mathbb{F})\to\mathbb{F}
$$
and check that $(\phantom{d},\phantom{d})$ is non-degenerate on the right.

So, let $u\in Hom_{\mathbb{F}}(J/\frak{m}J,\mathbb{F}) $ and $[x]\in H^1_{\{ L_{\nu}^*\}}(S,(Ad^0)^*(1))$. We have the exact sequence
\begin{equation}
0\to J/\frak{m}J \to \widehat{R}_S^{\square,loc, \mu_r}[[X_1,...,X_g]]/\frak{m}J \to R^{\square,  \mu_r}_{\mathcal{O},S}\to 0,
\label{eqaux}
\end{equation}
which after push-forward by $u$ gives
\begin{equation}
0\to I_u \to R_u \to R^{\square,  \mu_r}_{\mathcal{O},S}\to 0,
\label{eqaux2}
\end{equation}
where $R_u$ is an $\widehat{R}_S^{\square,loc,  \mu_r}$-algebra, via 
$$
\widehat{R}_S^{\square,loc,  \mu_r}\hookrightarrow \widehat{R}_S^{\square,loc,  \mu_r}[[X_1,...,X_g]]/\frak{m}J\to u_{*}\widehat{R}_S^{\square,loc,  \mu_r}[[X_1,...,X_g]]/\frak{m}J=R_u.
$$

Let $(\rho,(\rho_{\nu})_{\nu\in S}, (g_{\nu})_{\nu\in S})$ be the tuple representing the tautological point of $R^{\square,  \mu_r}_{\mathcal{O},S}$, i.e., corresponding to the identity on $R^{\square,  \mu_r}_{\mathcal{O},S}$. Since $R_u$ is an $\widehat{R}_S^{\square,loc,  \mu_r}$-algebra, then for all $\nu\in S$ we obtain a lift $\tilde{\rho}_{\nu}$ of $\rho_{\nu}$ with values in $GL_2(R_u)$. Likewise, choose lifts $\tilde{g}_{\nu}\in GL_2(R_u)$ of the $g_{\nu}$ and write $\tilde{\rho}_{\nu}':=int(\tilde{g}_{\nu}^{-1})(\tilde{\rho}_{\nu})$.

Consider a set-theoretic lift $\tilde{\rho}:G_S\to GL_2(R_u)$ of the universal representation $\rho_S^{\square,  \mu_r}:G_S\to GL_2(R^{\square,  \mu_r}_S)$ such that the image of $\tilde{\rho}$ consists of automorphisms of prescribed determinant $\phi$. This is possible since $SL_2$ is smooth. Define the cocycle $c:(G_S)^2\to Ad^0$ by $c(g_1,g_2):=\tilde{\rho}(g_1)\tilde{\rho}(g_2)\tilde{\rho}(g_1g_2)^{-1}-1$ and  the cochain $a_{\nu}:D_{\nu}\to Ad^0$ by $\tilde{\rho}(g)=(1+a_{\nu}(g))\tilde{\rho}'_{\nu}(g)$.

A short computation shows that $c|_{D_{\nu}}=\delta(a_{\nu})$. Now, we can define:
$$
([x],u):=(x,u):=\sum_{\nu\in S}inv((x_{\nu}\cup a_{\nu})+z_{\nu})
$$
where $x$ is a $1$-cocycle representing $[x]$, $z$ is a $2$-cochain of $G_S$ with values in $\mathcal{O}_S^*$ such that $\delta(z)=(x\cup c)$ (for the existence of this $2$-cochain, see the  last paragraph of the proof of \cite{neukirch} Thm. 8.6.7.), $\cup$ is the cup product followed by the map on cochains defined by the pairing $(Ad^0)^*(1)\times Ad^0\to \mathbb{F}(1)$, and for a local field $k$, the map $inv: H^2(G_k,\overline{k}^*)\to \mathbb{Q}/\mathbb{Z}$ is local Tate duality's isomorphism (see \cite{neukirch} Thm. 7.2.9).

We refer to the reader to \cite{kwannals} p. 541 for a proof of the fact that a) $(\phantom{c},\phantom{c})$ does not depend on the choice of the representative of $[x]$, nor on the choice of the lift $\tilde{\rho}$, nor on the choice of $g_{\nu}$ and b) that $(\phantom{c},\phantom{c})$ is $\mathbb{F}$-bilinear.

Now, if $f(u)=0$ with $u\neq 0$, let us extract the following subsequence from the Poitou-Tate exact sequence for $Ad^0$:
$$
H^1(S,Ad^0)\to\oplus_{\nu\in S}H^1(D_{\nu},Ad^0)/L_{\nu}\to H^1_{L_{\nu}^*}(S,(Ad^0)^*(1))^*\to\Sha^2(S,Ad^0).
$$
Since $L_{\nu}=0$ (as $l>6$), we have that $\Sha^1(S,(Ad^0)^*(1))\hookrightarrow H^1_{L_{\nu}^*}(S,(Ad^0)^*(1))$, and for each $[x]\in \Sha^1(S,(Ad^0)^*(1))$, we have $([x],u)=0$. But from Section 8.6.8 of \cite{neukirch}, $([x],u)=0$ coincides with the Poitou-Tate product of $[x]$ and the image of $[c]$ in $\Sha^2(S,Ad^0)$. Since the Poitou-Tate pairing is non-degenerate, it follows that $[c]=0$, hence $\tilde{\rho}$ can be chosen to be a Galois representation and $[z]=0$. 

Hence, the attachment $(\rho,(\rho_{\nu}),(g_{\nu}))\mapsto(\tilde{\rho},\tilde{(\rho}_{\nu}),(\tilde{g}_{\nu}))$ defines a section of the map $R_u\to R^{\square,  \mu_r}_{\mathcal{O},S}$. But this is impossible since $R_u\to R_{\mathcal{O},S}^{\square,  \mu_r}$ induces an isomorphism on $\widehat{R}_S^{\square,loc,  \mu_r}$-tangent spaces, hence the morphism is \'etale. But the section also defines a morphism on tangent spaces, hence the section is also \'etale, an in particular unramified. Hence, by, for instance, Lemma 2.10 in  \cite{milne}, it must be an isomorphism. But this is is a contradiction with \ref{eqaux2} and the fact that $I_u$ is isomorphic to $\mathbb{F}$ as $R_u$-module.

\end{proof}

\subsection{The final step.}To conclude our proof, we check in this section that $\mathcal{R}_{k_r, global}$ is finitely generated as $\mathcal{O}$-module and that there is a point $\psi_{k_r}:\mathcal{R}_{k_r, global}\to\overline{\mathbb{Z}}_l$. We also invoke a modularity result from \cite{gee} for points in the analogue of $\mathcal{R}_{k_r, global}$ over certain solvable extension, which by a base-change modularity result yields modularity over $\mathbb{Q}$, namely, that the point $\psi_{k_r}$ corresponds to a weight $k_r$ modular crystalline and potentially diagonalizable deformation of $\overline{\rho}_{f,l}$.

To start with, we recall one of the main results in \cite{gee}. This is a very general result and one of the main inputs in our proof. The relevant background on automorphic representations is standard and we refer the reader to \cite{gee} Section 1.

\begin{thm}[\cite{gee}, Theorem 4.4.1] Let $F$ be an imaginary CM field with maximal totally real subfield $F^{+}$. Let $n\in\mathbb{Z}_{\geq 1}$ be an integer, and let $l>2(n + 1)$ be an odd prime, such that  $\zeta_l\not\in F$ and all primes of $F^{+}$ above $l$ split in $F$.  Let $S$ be a finite set of finite places of $F^{+}$, including all places above $l$, such that each place in $S$ splits in $F$. For each place $\nu\in S$ choose a place $\tilde{\nu}$ of $F$ lying over $\nu$. Let $\mu$ be an algebraic character of $G_{F^{+}}$ and let $\overline{r}:G_F \to GL_n(\bar{\mathbb{F}}_l)$ be a continuous representation such that

\begin{itemize}
\item[1.] $ (\overline{r}, \overline{\mu})$ is a polarized mod $l$ representation unramified outside $S$, which either we suppose is ordinarily automorphic or we suppose is potentially diagonalizably automorphic and,
\item[2.] $\bar{r}|G_{F(\zeta_l)}$ is irreducible.
\end{itemize}

For $\nu\in S$, let $\rho_{\nu} : G_{F_{\tilde{\nu}}} \to GL_n(\mathcal{O}_{\mathbb{Q}_l})$ be a lift of $\overline{r}|_{G_{F_{\tilde{\nu}}}}$. If $\nu|l$, assume further that $\rho_{\nu}$ is potentially diagonalizable, and that for all $\tau: F_{\tilde{\nu}}\to \overline{\mathbb{Q}}_l$, $HT_{\tau}(\rho_{\nu})$ consists of $n$ distinct integers.

Then there is a regular algebraic, cuspidal, polarized automorphic representation $(\pi, \chi)$ of $GL_n(\mathbb{A}_F )$ such that
\begin{itemize}
\item[(1)] $\overline{r_{l,\iota}(\pi)}\cong \bar{r}$;
\item[(2)] $r_{l,\iota}(\chi)\varepsilon_l^{1-n} = \mu$;
\item[(3)] $\pi$ has level potentially prime to $l$;
\item[(4)] $\pi$is unramified outside $S$;
\item[(5)] for $\nu\in S$ we have: $\rho_{\nu} $  connects to $ r_{l,\iota}(\pi)|_{G_{F_{\nu}}}.$
\item[(6)]  Suppose  that for all $\nu \mid l$ the lifts $\rho_{\nu} $ are crystalline. Then for all such $\nu$ the representation $ r_{l,\iota}(\pi)|_{G_{F_{\nu}}}$ is crystalline, i.e., the level of $\pi$ is prime to $l$.
\item[(7)] Define the ring $\mathcal{R}_{F}$ as the universal deformation ring of $\bar{r}$ for the deformation problem with local conditions at primes $\nu$ dividing $\ell$ corresponding to fixing the irreducible component of the corresponding local deformation ring (of potentially crystalline representations with fixed Hodge-Tate weights) that contains $\rho_{\nu} $, and similarly for primes $\nu \in S$ not dividing $\ell$. Then $\mathcal{R}_{F}$ is a finitely generated $\mathcal{O}$-module and it has at least one point in $\overline{\mathbb{Z}}_l$, corresponding to the representation $r_{l,\iota}(\pi)$.

\end{itemize}
\label{geetool}
\end{thm}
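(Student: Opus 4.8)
The plan is to prove this automorphy lifting statement by the standard three-step architecture. \emph{Step 1:} produce a global de Rham lift $r\colon G_F\to\mathrm{GL}_n(\mathcal{O}_{\overline{\mathbb{Q}}_l})$ of $\overline r$ which is polarized with multiplier $\mu$, unramified outside $S$, and such that for each $\nu\in S$ the restriction $r|_{G_{F_{\tilde\nu}}}$ lies on the irreducible component of the relevant local framed deformation ring that contains $\rho_\nu$; equivalently $r|_{G_{F_{\tilde\nu}}}$ connects to $\rho_\nu$. \emph{Step 2:} make $\overline r$ automorphic over a suitable CM extension $L/F$ in a way compatible with $r$ at the primes above $l$, by a change-of-weight potential automorphy argument. \emph{Step 3:} apply the polarized Taylor--Wiles--Kisin automorphy lifting theorem over $L$ to conclude that $r|_{G_L}$ is automorphic, then descend to $F$ by solvable base change. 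The seven conclusions are then read off: (1)--(6) from the construction of $r$ and the choice of local conditions (crystalline $\rho_\nu$ forces the level prime to $l$, a minimal choice outside $l$ controls the ramification, and ``connects'' is built in), and (7) is the $R=\mathbb{T}$ statement furnished by Step 3.

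For Step 1 I would set up the polarized global deformation problem for $\overline r$ with fixed multiplier $\mu$, unramified outside $S$, imposing at each $\nu\mid l$ the local condition ``lie on the component of Kisin's framed potentially crystalline deformation ring containing $\rho_\nu$'' — a closed, $\mathcal O$-flat condition whose generic fibre is equidimensional of the dimension predicted by Kisin, and which is nonempty precisely because $\rho_\nu$ is given (and potentially diagonalizable with regular Hodge--Tate weights, so it sits on a well-behaved component) — and at $\nu\in S$ with $\nu\nmid l$ a minimal condition chosen so the local ring is $\mathcal O$-flat with a $\overline{\mathbb Q}_l$-point. A Galois-cohomology computation along the lines of Wiles's formula for this polarized Selmer problem, using that $\overline r|_{G_{F(\zeta_l)}}$ is irreducible (so the relevant invariants vanish and $l>2(n+1)$ makes the adjoint split off a scalar), shows the framed global ring has relative dimension at least $1$; Böckle's flatness criterion then forces it to be $\mathcal O$-flat, and a complete local $\mathcal O$-flat algebra of positive relative dimension has a point in $\overline{\mathbb Z}_l$. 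This is exactly the Khare--Wintenberger local-to-global argument, and it yields the lift $r$.

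For Steps 2--3 I would invoke the Dwork-family machinery used in \cite{gee}: there is a smooth projective family of Calabi--Yau hypersurfaces over a rational base whose cohomology in a fixed degree is $n$-dimensional with prescribed regular Hodge--Tate weights and is potentially ``potentially diagonalizably automorphic'', and — by a Moret-Bailly moving lemma — admits a fibre, defined over a CM field $L$ linearly disjoint from $\overline F^{\ker\overline r}(\zeta_l)$ over $F(\zeta_l)$, whose mod-$l$ representation is $\overline r|_{G_L}$ up to twist and whose $l$-adic representation is a de Rham lift with the \emph{same} Hodge--Tate weights as, and on the \emph{same} local component as, $r|_{G_L}$ at each place over $l$. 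This produces an automorphic representation of $\mathrm{GL}_n(\mathbb A_L)$ with residual representation $\overline r|_{G_L}$ whose $l$-adic realization connects to $r|_{G_L}$ at all $v\mid l$, so $\overline r|_{G_L}$ is potentially diagonalizably automorphic compatibly with $r$. Since $\overline r|_{G_{L(\zeta_l)}}$ is still irreducible and $l>2(n+1)$, its image is adequate in Thorne's sense, so the patching argument — construction of Taylor--Wiles--Kisin primes, the Ihara-avoidance and potentially-diagonalizable-component bookkeeping, and the invariance of potential diagonalizability under ``connects'' — applies over $L$ and gives $R=\mathbb{T}$ with $\mathbb{T}$ finite over $\mathcal O$; hence $r|_{G_L}$ is automorphic. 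Arranging $L/F$ solvable (iterating the potential-automorphy construction to build a solvable envelope if necessary) and applying Arthur--Clozel solvable base change descends this to the desired pair $(\pi,\chi)$ over $F$; finiteness of $\mathcal R_F$ over $\mathcal O$ follows because $\mathcal R_F\to\mathcal R_L$ is finite and $\mathcal R_L$ is finite over $\mathcal O$, and the $\overline{\mathbb Z}_l$-point comes from $r$.

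The hard part will be Step 3: the polarized Taylor--Wiles--Kisin patching argument with local deformation rings at $l$ that are the non-smooth, but $\mathcal O$-flat and equidimensional, potentially diagonalizable components. The delicate points are (a) producing enough Taylor--Wiles--Kisin primes, which forces the careful choice of $L$ so that $\overline r|_{G_{L(\zeta_l)}}$ stays adequate; (b) controlling the patched module over the patched local-times-global deformation ring so that it is supported exactly on the component cut out by the $\rho_\nu$'s — this is where the invariance of potential diagonalizability under ``connects'' is indispensable and where an Ihara-avoidance type trick is needed; and (c) bootstrapping via residual irreducibility to propagate automorphy to the single point corresponding to $r$. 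A second genuine difficulty is the change-of-weight geometry of Step 2, namely engineering a Dwork-family fibre whose $l$-adic cohomology lands on \emph{the same} local component as the prescribed $r|_{G_L}$, not merely with the same Hodge--Tate weights; this is where the regularity hypothesis on $HT_\tau(\rho_\nu)$ and the detailed structure of the crystalline local deformation rings enter. By comparison Step 1 is comparatively soft, being a direct application of Böckle's flatness theorem, Kisin's local dimension results, and the Khare--Wintenberger dimension count.
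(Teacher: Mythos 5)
Before comparing, note that the paper does not reprove this statement: it is quoted from \cite{gee} (Theorem 4.4.1 there), and the paper's Remark 1 only explains that items (6) and (7) are extracted from the proof in loc.\ cit., which runs through Theorem 4.3.1 of \cite{gee} (construction of the lift via the ring $\mathcal{R}_F$) combined with the automorphy lifting theorem already established there. Measured against that actual argument, your reconstruction has a structural flaw: you never use hypothesis 1. In the statement, ``ordinarily automorphic or potentially diagonalizably automorphic'' means $\overline{r}$ already comes from a genuine automorphic representation of $GL_n(\mathbb{A}_F)$ (the ``potentially'' modifies ``diagonalizable'', not ``automorphic''); this is exactly the residual-automorphy input that lets the automorphy lifting theorem be applied over $F$ itself, after at most \emph{solvable} base changes to arrange technical conditions. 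Your Steps 2--3 instead manufacture automorphy over a CM field $L$ produced by a Dwork-family/Moret--Bailly argument and then ``descend by Arthur--Clozel''. But Moret--Bailly gives no control forcing $L/F$ solvable, and solvable base change is the only descent available; the parenthetical ``iterating the potential-automorphy construction to build a solvable envelope if necessary'' is not a known technique --- if it were, potential automorphy would imply automorphy outright, which it notoriously does not. So as structured, Step 3 cannot return you to $F$, and the hypothesis that would save you sits unused.

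There is a second, independent gap in your Step 1. Böckle's criterion (\cite{bockle}, Lemma 2 --- the same one this paper uses later) takes as \emph{input} that the global deformation ring is a finitely generated $\mathcal{O}$-module, and only then upgrades the presentation bound $g-r(J)\geq 1$ to finite flatness and hence to $\overline{\mathbb{Z}}_l$-points. The Wiles-formula computation alone cannot exclude that the ring is entirely $l$-torsion (e.g.\ $\mathbb{F}[[X]]$ has dimension $1$ and no characteristic-zero points), so ``dimension $\geq 1$ plus Böckle forces $\mathcal{O}$-flatness'' does not yield the lift $r$. In \cite{gee} the finiteness over $\mathcal{O}$ is precisely the content of item (7) and is proved on the automorphic side (comparison with a Hecke algebra, using hypothesis 1); the present paper mirrors this order deliberately: dimension bound (Theorem \ref{cotadimension}), finiteness imported from item (7) of Theorem \ref{geetool} over $F$ and descended (Proposition \ref{fgeneration}), and only then Böckle. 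Your ordering is circular: Step 1 claims to produce $r$ before any automorphy input is available, while your Steps 2--3 (the only place automorphy enters) presuppose $r$ to pin down the local components at $l$. To repair the proposal you should delete the Dwork-family step, invoke hypothesis 1 directly, prove finiteness of the deformation ring over $\mathcal{O}$ via the automorphy lifting theorem first, and only then run the Khare--Wintenberger dimension count and Böckle's lemma to obtain the $\overline{\mathbb{Z}}_l$-point, whose automorphy is again given by the lifting theorem.
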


Remark 1: The statement of this theorem in \cite{gee} does not include items (6) and (7) but these facts are proved as part of the proof of the theorem: in fact, the proof of the existence of the global lift with prescribed local properties (which is Theorem 4.3.1 of loc. cit.) begins by considering the ring $\mathcal{R}_{F}$ that we have just defined (where locally at primes $\nu$ dividing $l$ the universal local ring considered is that of crystalline representations of fixed Hodge-Tate weights if the given local representation $\rho_\nu$ is crystalline: from this fact item (6) follows), which in particular implies fixing an irreducible component at each of the local deformation rings considered. Then, it is proved that this ring has the properties that we state in item (7), and combining with a previous Automorphy Lifting Theorem it is proved that any of the characteristic 0 points in this ring is automorphic, thus proving item (1) to (5).
\\

Remark 2: As the referee has observed, all that we will need from this Theorem in what follows is the fact that the ring $\mathcal{R}_{F}$ is a finitely generated $\mathcal{O}$-module. We have decided however to include the rest of the statement because we think it is important that the reader knows that a potential version of the result that we want to prove (after base changing to a suitable CM field) appears already in \cite{gee}.
\\

We want to apply this result to the residual representation $\bar{\rho}_{f,\ell}$ as in the previous subsections. We fix $S$, as in section 3.2, to be the union of the set of primes in $N$, the prime $\ell$ and $\infty$. Choose an imaginary quadratic field $F$ in which all finite primes in $S$ are split. We consider the restriction of $\bar{\rho}_{f,\ell}$ to $G_F$ and we fix the local lifts $\rho_\nu$ of it at all places in $S$ as in previous subsections: at $\ell$ we take the potentially diagonalizable lift of Hodge-Tate weights $\{ 0 , k_r -1 \}$ constructed in lemma \ref{firstlift}, at finite places $p$ in $S$ different from $\ell$ we take a lift of $\overline{\rho_{f,l}}|_{G_{F_p}}$ satisfying the property which characterizes the local ring $R_{\mathcal{O},p}^{\square,\mu_r}$. It is important to notice that the modular representation that will be produced as output of the theorem will have the same behaviour (more precisely, the same local inertial type) locally at all primes not dividing $l$ in $S$, as follows from item (5) or (7) because inertial types are constant in irreducible components of these local deformation rings (see Lemma 1.3.4 in \cite{gee}).\\

Condition 1 in the theorem is satisfied since we know that the modular lift $\rho_{f,\ell}$ is potentially diagonalizable, and by base change the restriction to $G_F$ of this representation is  automorphic  (\cite{langlands}, assertion (A) of page 19) and also potentially diagonalizable, as this condition is obviously preserved for restriction base change. 

Condition 2 is also satisfied because we are assuming that the image of $\bar{\rho}_{f,\ell}$ contains $SL_2(\mathbb{F}_l)$, a condition that is preserved when restricting to $G_{F(\zeta_l)}$.  

 With the local conditions that we have imposed, we apply the previous theorem to obtain a lift of the restriction of $\bar{\rho}_{f,\ell}$ to $G_F$ which on the one hand is automorphic and on the other hand corresponds to a point in the deformation ring $\mathcal{R}_{k_r,global, F}$ which is defined as in previous subsections except for the fact that we are now working  over $F$. This lift is attached to an automorphic form $\pi$ of $GL_2(F)$ and it has all the properties (locally at $l$ it is crystalline, potentially diagonalizable and has the right Hodge-Tate weights) that we want in the main result of this paper except for the fact that it is a representation of $G_F$, not necessarily extending to $G_\Q$. \\
We also know thanks to item (7) of the theorem that the ring $\mathcal{R}_{k_r,global, F}$ is a finitely generated
$\mathcal{O}$-module. From this the next step is to ``descend" this property to the corresponding deformation ring over $\Q$ of $\bar{\rho}_{f,\ell}$.

\begin{prop} If  $\mathcal{R}_{k_r,global,F}$ is a finitely generated $\mathcal{O}$-module, then $\mathcal{R}_{k_r,global}$ is a finitely generated $\mathcal{O}$-module.
\label{fgeneration}
\end{prop}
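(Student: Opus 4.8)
The plan is to realise $\mathcal{R}_{k_r,global}$ as a finite module (in fact a quotient) of $\mathcal{R}_{k_r,global,F}$ via the \emph{restriction} homomorphism, and then conclude by transitivity of module-finiteness over $\mathcal{O}$.

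\emph{Step 1: the restriction map.} Since $F$ was chosen so that every finite prime $\nu\in S$ splits in $F$, for the selected prime $\tilde\nu\mid\nu$ one has $F_{\tilde\nu}=\mathbb{Q}_\nu$ and hence $D_{\tilde\nu}=D_\nu$ as decomposition groups; over the imaginary field $F$ there is no archimedean condition. Consequently each local deformation condition of Definition~\ref{defnlocal} used to define $\mathcal{R}_{k_r,global,F}$ (including the fixed determinant $\mu_r$ and, at the primes over $l$, the chosen irreducible component through the lift of Lemma~\ref{firstlift}) is literally the pull-back along $D_{\tilde\nu}=D_\nu$ of the corresponding condition over $\mathbb{Q}$. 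Therefore the restriction to $G_F$ of the universal tuple over $\mathcal{R}_{k_r,global}$ is a deformation of $\overline{\rho}_{f,l}|_{G_F}$ of the kind classified by $\mathcal{R}_{k_r,global,F}$, and its universal property yields a local $\mathcal{O}$-algebra homomorphism $\iota\colon\mathcal{R}_{k_r,global,F}\to\mathcal{R}_{k_r,global}$. We record that $\overline{\rho}_{f,l}|_{G_F}$ is still absolutely irreducible: $\mathrm{Gal}(F/\mathbb{Q})$ being abelian, $G_F\supseteq[G_{\mathbb{Q}},G_{\mathbb{Q}}]$, so $\mathrm{Im}(\overline{\rho}_{f,l}|_{G_F})\supseteq[\mathrm{SL}_2(\mathbb{F}_l),\mathrm{SL}_2(\mathbb{F}_l)]=\mathrm{SL}_2(\mathbb{F}_l)$ because $l\ge 7$.

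\emph{Step 2: the special fibre is $\mathbb{F}$.} If $\mathcal{R}_{k_r,global}=0$ there is nothing to prove, so assume it is nonzero; then, $\iota$ being local, the fibre ring $\overline{R}:=\mathcal{R}_{k_r,global}/\iota(\mathfrak m_{\mathcal{R}_{k_r,global,F}})\mathcal{R}_{k_r,global}$ is a nonzero Noetherian complete local $\mathbb{F}$-algebra with residue field $\mathbb{F}$. A morphism $\overline{R}\to\mathbb{F}[\varepsilon]$ of $\mathbb{F}$-algebras is the same datum as a deformation $\rho_\varepsilon$ of $\overline{\rho}_{f,l}$ over $G_{\mathbb{Q}}$ satisfying our local conditions whose restriction to $G_F$ is the trivial deformation; after conjugating by an element of $\mathrm{GL}_2(\mathbb{F}[\varepsilon])_1$ (which does not change the corresponding point of $\overline{R}$, nor the local conditions, which are carried by the $\rho_\nu$) we may assume $\rho_\varepsilon|_{G_F}=\overline{\rho}_{f,l}|_{G_F}\otimes_{\mathbb{F}}\mathbb{F}[\varepsilon]$. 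Fix $c\in G_{\mathbb{Q}}\smallsetminus G_F$, so $G_{\mathbb{Q}}=G_F\sqcup cG_F$ and $c^2\in G_F$. For $g\in G_F$,
\[
\rho_\varepsilon(c)\,\overline{\rho}_{f,l}(g)\,\rho_\varepsilon(c)^{-1}=\rho_\varepsilon(cgc^{-1})=\overline{\rho}_{f,l}(cgc^{-1})=\overline{\rho}_{f,l}(c)\,\overline{\rho}_{f,l}(g)\,\overline{\rho}_{f,l}(c)^{-1},
\]
so $\overline{\rho}_{f,l}(c)^{-1}\rho_\varepsilon(c)$ centralises $\overline{\rho}_{f,l}(G_F)$. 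As $\overline{\rho}_{f,l}|_{G_F}$ is absolutely irreducible over $\mathbb{F}$ one has $\mathrm{End}_{\mathbb{F}[\varepsilon][G_F]}(\mathbb{F}[\varepsilon]^2)=\mathbb{F}[\varepsilon]$, whence $\rho_\varepsilon(c)=\lambda\,\overline{\rho}_{f,l}(c)$ with $\lambda\in 1+\varepsilon\mathbb{F}$. Squaring and using $\rho_\varepsilon(c)^2=\rho_\varepsilon(c^2)=\overline{\rho}_{f,l}(c^2)$ (as $c^2\in G_F$) gives $\lambda^2=1$, hence $\lambda=1$ because $l$ is odd. Thus $\rho_\varepsilon$ agrees with $\overline{\rho}_{f,l}\otimes\mathbb{F}[\varepsilon]$ on $G_F$ and on $c$, hence everywhere: it is the trivial deformation. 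Therefore the tangent space of $\overline{R}$ vanishes, so $\mathfrak m_{\overline{R}}=\mathfrak m_{\overline{R}}^{2}$ and Nakayama gives $\overline{R}=\mathbb{F}$.

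\emph{Step 3: conclusion.} Since $\overline{R}=\mathbb{F}$, the ideal $\iota(\mathfrak m_{\mathcal{R}_{k_r,global,F}})\mathcal{R}_{k_r,global}$ equals $\mathfrak m_{\mathcal{R}_{k_r,global}}$; in particular its adic topology is the $\mathfrak m$-adic topology, for which $\mathcal{R}_{k_r,global}$ is complete, and its quotient by that ideal is generated by a single element over $\mathbb{F}$. By the topological Nakayama lemma $\iota$ is surjective, so $\mathcal{R}_{k_r,global}$ is a quotient of $\mathcal{R}_{k_r,global,F}$; as the latter is a finite $\mathcal{O}$-module by hypothesis, so is $\mathcal{R}_{k_r,global}$. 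The conceptual core of the argument is the Schur's-lemma computation of Step 2, which is short; the point that most needs care is Step 1, namely the verification that over the split field $F$ all the local conditions of Definition~\ref{defnlocal} (in their over-$F$ incarnation) are exactly the pull-backs of those over $\mathbb{Q}$ — which is precisely the reason $F$ was chosen with every prime of $S$ split.
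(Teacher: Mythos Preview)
Your proof is correct, but it takes a genuinely different route from the paper's. Both arguments begin with the same restriction homomorphism $\iota\colon\mathcal{R}_{k_r,global,F}\to\mathcal{R}_{k_r,global}$ coming from the universal property (and both implicitly rely on the splitting of $S$ in $F$ to match local conditions). From there the two diverge.

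The paper works modulo $\ell$: since $\mathcal{R}_{k_r,global,F}/(\ell)$ is finite, the universal mod-$\ell$ representation $\alpha$ of $G_F$ has finite image; pushing forward along $\gamma$ shows that $\beta|_{G_F}$ has finite image, and then finite index of $G_F$ in $G_{\mathbb{Q}}$ gives that $\beta$ itself has finite image. The conclusion is then obtained by invoking the black-box Lemma~3.6 of \cite{kwp2} (finite image of the universal mod-$\ell$ representation plus absolute irreducibility of $\overline{\rho}$ forces finite generation over $\mathcal{O}$). Your argument instead proves directly that $\iota$ is \emph{surjective}: the Schur's-lemma computation shows that restriction to $G_F$ is injective on tangent spaces, hence the fibre ring $\overline{R}$ has trivial tangent space and equals $\mathbb{F}$, so topological Nakayama gives surjectivity of $\iota$ and $\mathcal{R}_{k_r,global}$ is literally a quotient of $\mathcal{R}_{k_r,global,F}$.

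What each approach buys: yours is self-contained (no appeal to \cite{kwp2}) and yields the stronger structural statement that $\iota$ is onto; the price is that the $\lambda^2=1$ step genuinely uses $[F:\mathbb{Q}]=2$, so the argument as written is specific to quadratic $F$. The paper's proof only needs $[G_{\mathbb{Q}}:G_F]<\infty$ at the cost of importing an external lemma. In the present setting, where $F$ is chosen imaginary quadratic, either works.
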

\begin{proof}
We follow a similar argument to that used in the proof of Theorem 10.1 of \cite{kwannals}, which is stated for the universal deformation ring corresponding to another deformation problem, but the proof can be adapted to our setting as follows:

Since we are assuming that $\mathcal{R}_{k_r,global, F}$ is a finitely generated $\mathcal{O}$-module, then $\mathcal{R}_{k_r,global,F}/\frak{L}$ is a finite set, where $\frak{L} = (\ell)$. Hence, $\mathrm{GL}_2(\mathcal{R}_{k_r,global,F}/\frak{L})$ is also a finite set. 

Now, from the universal property of $\mathcal{R}_{k_r,global,F}/\frak{L}$, there exists a CNL $\mathcal{O}$-algebra morphism $\gamma:\mathcal{R}_{k_r,global,F}/\frak{L}\to\mathcal{R}_{k_r,global}/\frak{L}$ which takes the universal mod $\frak{L}$ representation $\alpha:G_F\to GL_2(\mathcal{R}_{k_r,global,F}/\frak{L})$ to the restriction to $G_F$ of the universal mod $\frak{L}$ representation $\beta:G_{\mathbb{Q}}\to GL_2(\mathcal{R}_{k_r,global}/\frak{L})$.

Since  $\alpha$ has finite image, so has $\beta|_{G_F}$. Since $G_F$ has finite index in $G_\Q$,  the image of  $\beta: G_{\mathbb{Q}}\to GL_2(\mathcal{R}_{k_r,global}/\frak{L})$ is also finite. From this, since $\overline{\rho}_{f,l}$ is absolutely irreducible, using Lemma 3.6 of \cite{kwp2}, we conclude that $\mathcal{R}_{k_r,global}$ is a finitely generated $\mathcal{O}$-module.
\end{proof}

\begin{cor}There exists at least one point $\mathcal{R}_{k_r,global}\to\bar{\Z}_\ell\to 0$ where $\bar{\Z}_\ell$ denotes the ring of integers of $\bar{\Q}_\ell$.
\end{cor}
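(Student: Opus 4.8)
The plan is to combine the two facts just established about $\mathcal{R}_{k_r,global}$: by Theorem \ref{cotadimension} it has absolute dimension at least $1$, and by Proposition \ref{fgeneration} (applied using item (7) of Theorem \ref{geetool}, which guarantees $\mathcal{R}_{k_r,global,F}$ is a finitely generated $\mathcal{O}$-module) it is a finitely generated $\mathcal{O}$-module. First I would invoke the argument of B\"ockle in \cite{bockle} (the same one referred to in the introduction): a complete noetherian local $\mathcal{O}$-algebra which is finite as an $\mathcal{O}$-module and has absolute Krull dimension at least $1$ must have a prime of dimension $1$, equivalently a quotient that is a domain finite flat over $\mathcal{O}$; such a domain embeds into the ring of integers $\overline{\Z}_\ell$ of $\overline{\Q}_\ell$ after passing to a sufficiently large finite extension of $K_{f,\lambda}$, giving the desired $\mathcal{O}$-algebra homomorphism $\mathcal{R}_{k_r,global}\to\overline{\Z}_\ell$.

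In more detail, I would argue as follows. Since $\mathcal{R}_{k_r,global}$ is a finitely generated $\mathcal{O}$-module and $\mathcal{O}$ is a complete DVR, $\mathcal{R}_{k_r,global}$ is itself a complete noetherian local ring; its absolute dimension equals $\dim\mathcal{O}+\delta = 1+\delta$ where $\delta\ge 0$, but by Theorem \ref{cotadimension} the absolute dimension is at least $1$, so $\mathcal{R}_{k_r,global}$ is not $\ell$-torsion (were it $\ell$-power torsion it would be finite, hence of dimension $0$). Therefore $\mathcal{R}_{k_r,global}$ has a minimal prime $\mathfrak{q}$ with $\ell\notin\mathfrak{q}$, and $\mathcal{R}_{k_r,global}/\mathfrak{q}$ is an $\mathcal{O}$-algebra which is a domain, finite as an $\mathcal{O}$-module, and $\mathcal{O}$-torsion free, hence finite flat over $\mathcal{O}$. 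Its fraction field is then a finite extension $L$ of $K_{f,\lambda}$, and the normalization of $\mathcal{O}$ in $L$ is the ring of integers $\mathcal{O}_L$; composing $\mathcal{R}_{k_r,global}\twoheadrightarrow\mathcal{R}_{k_r,global}/\mathfrak{q}\hookrightarrow\mathcal{O}_L\hookrightarrow\overline{\Z}_\ell$ produces the required point.

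The only real subtlety, and the step I would single out as the main point to get right, is the passage from ``absolute dimension $\ge 1$ plus module-finiteness over $\mathcal{O}$'' to ``existence of a characteristic-zero point'': one must make sure that the dimension inequality forces the existence of a minimal prime avoiding $\ell$, which is exactly the content of B\"ockle's lemma in \cite{bockle} and is where the hypothesis that $\mathcal{O}$ itself has absolute dimension $1$ enters. Everything else — that a module-finite $\mathcal{O}$-algebra is complete noetherian local, that an $\mathcal{O}$-torsion-free module-finite domain is finite flat over $\mathcal{O}$, and that such a domain embeds in $\overline{\Z}_\ell$ — is routine commutative algebra. The conclusion then records that the deformation parametrized by this point is, by construction of $\mathcal{R}_{k_r,global}$, a lift of $\overline{\rho}_{f,l}$ which is crystalline of Hodge-Tate weight $\{0,k_r-1\}$ and potentially diagonalizable at $\ell$, which, together with the modularity obtained from Theorem \ref{geetool} and descent, will complete the proof of the main theorem in the subsequent paragraph.
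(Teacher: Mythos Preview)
Your proof is correct and follows essentially the same approach as the paper: combine finiteness of $\mathcal{R}_{k_r,global}$ over $\mathcal{O}$ (from Proposition \ref{fgeneration} via item (7) of Theorem \ref{geetool}) with the dimension bound of Theorem \ref{cotadimension}, and then invoke B\"ockle's argument from \cite{bockle} to produce a $\overline{\mathbb{Z}}_\ell$-point. The only cosmetic difference is that the paper quotes B\"ockle's Lemma 2 as a black box yielding the stronger conclusion that $\mathcal{R}_{k_r,global}$ is finite flat and a complete intersection over $\mathcal{O}$, whereas you spell out directly the minimal-prime argument needed to extract a characteristic-zero point; your version is more elementary and entirely sufficient for the corollary as stated.
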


\begin{proof} We know from item (7) of theorem \ref{geetool} that $\mathcal{R}_{k_r,global,F}$ is a finitely generated $\mathcal{O}$-module, thus it follows from proposition \ref{fgeneration} that 
$\mathcal{R}_{k_r,global}$ is a finitely generated $\mathcal{O}$-module.
As proved by B\"ockle (see \cite{bockle}, Lemma 2) from the combination of this with theorem \ref{cotadimension} it follows that  $\mathcal{R}_{k_r,global}$ is a finite flat and complete intersections $\mathcal{O}$-module. From this, we deduce that it contains a point over $\bar{\Z}_\ell$.
\end{proof}

Now we conclude our work with the following observation:

\begin{cor}The point $\mathcal{R}_{k_r,global}\to \bar{\Z}_\ell \to 0$ corresponds to a modular representation of weight $k_r$.
\end{cor}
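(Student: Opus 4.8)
The plan is to upgrade the $\overline{\mathbb{Q}}_l$-point of $\mathcal{R}_{k_r,global}$ produced in the previous corollary to a point that is genuinely modular over $\mathbb{Q}$, by first establishing modularity over the CM field $F$ and then descending along the solvable extension $F/\mathbb{Q}$. Concretely, the point $\psi_{k_r}:\mathcal{R}_{k_r,global}\to\overline{\mathbb{Z}}_l$ gives a crystalline, potentially diagonalizable deformation $\rho_{k_r}:G_{\mathbb{Q}}\to\mathrm{GL}_2(\overline{\mathbb{Z}}_l)$ of $\overline{\rho}_{f,l}$ of Hodge--Tate weights $\{0,k_r-1\}$ with determinant $\mu_r=\chi\epsilon_l^{k_r-1}$ and satisfying the prescribed local conditions at all $\nu\in S$. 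Restricting to $G_F$, this is a point of $\mathcal{R}_{k_r,global,F}$, so it suffices to show that this restricted point is automorphic and then that automorphy descends. First I would verify that Theorem \ref{geetool} applies to $\overline{\rho}_{f,l}|_{G_F}$ with the chosen local lifts $\{\rho_\nu\}_{\nu\in S}$: Condition 1 holds because $\rho_{f,l}$ is potentially diagonalizable and automorphic and both properties are preserved under restriction base change to $F$ (Langlands), and Condition 2 holds because $\mathrm{SL}_2(\mathbb{F}_l)\subseteq\mathrm{Im}(\overline{\rho}_{f,l})$ survives restriction to $G_{F(\zeta_l)}$; the hypotheses $l>2(n+1)=6$ and $\zeta_l\notin F$ are exactly our running assumptions, and $F$ was chosen so that all primes of $S$ split.

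Second, I would invoke the automorphy lifting embedded in Theorem \ref{geetool}: since $\mathcal{R}_{k_r,global,F}$ is the deformation ring obtained by fixing, at each $\nu\in S$, the irreducible component of the local deformation ring containing $\rho_\nu$, and since we have chosen our local conditions (Definition \ref{defnlocal} and the normalization of the component at $l$ via Lemma \ref{firstlift}) to match exactly the component of the lift produced by the theorem, the characteristic-zero point $\psi_{k_r}$ of $\mathcal{R}_{k_r,global,F}$ is automorphic --- it corresponds to (the Galois representation attached to) a regular algebraic cuspidal polarized automorphic representation $\pi$ of $\mathrm{GL}_2(\mathbb{A}_F)$, crystalline and potentially diagonalizable at the primes above $l$ with Hodge--Tate weights $\{0,k_r-1\}$, by items (1)--(7) of the theorem. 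Here one uses that inertial types are locally constant on the irreducible components of the local deformation rings (Lemma 1.3.4 of \cite{gee}), so that the local behaviour at the bad primes of the output representation is pinned down by our choices.

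Third, I would descend from $F$ to $\mathbb{Q}$. The representation $\rho_{k_r}:G_{\mathbb{Q}}\to\mathrm{GL}_2(\overline{\mathbb{Q}}_l)$ is odd (it has determinant $\chi\epsilon_l^{k_r-1}$, and oddness is among the local conditions imposed at $\infty$ in Definition \ref{defnlocal}), absolutely irreducible (it lifts $\overline{\rho}_{f,l}$, whose image contains $\mathrm{SL}_2(\mathbb{F}_l)$), and its restriction to $G_F$ is modular by the previous step. Since $F/\mathbb{Q}$ is solvable (quadratic), cyclic base change for $\mathrm{GL}_2$ (Langlands--Tunnell, in the form used in Khare--Wintenberger, cf. the descent arguments of \cite{kwannals}) shows that the automorphic representation $\pi$ of $\mathrm{GL}_2(\mathbb{A}_F)$ descends to an automorphic representation of $\mathrm{GL}_2(\mathbb{A}_{\mathbb{Q}})$ whose associated $l$-adic Galois representation is $\rho_{k_r}$; by the correspondence between such automorphic representations and classical newforms, $\rho_{k_r}\cong\rho_{f_{k_r},l}$ for some cusp form $f_{k_r}$, and comparing Hodge--Tate weights and determinant forces $f_{k_r}\in S_{k_r}(\Gamma_0(N),\chi)$. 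The main obstacle I expect is the descent step: one must check that the cyclic base change descent criterion is satisfied (the relevant Galois-invariance of $\pi$ under $\mathrm{Gal}(F/\mathbb{Q})$ comes for free because $\rho_{k_r}$ itself is defined over $\mathbb{Q}$, so $\rho_{k_r}|_{G_F}$ is $\mathrm{Gal}(F/\mathbb{Q})$-stable, hence so is $\pi$ by strong multiplicity one), and that the descended automorphic form has the correct level and nebentype --- this is where one uses that the local conditions at the primes dividing $N$ were chosen to be the inertia-rigid or semistable conditions matching $\overline{\rho}_{f,l}$, so that the conductor of $\rho_{k_r}$ at those primes is the same as that of $\rho_{f,l}$, namely $N$, and the determinant computation pins down the nebentype as $\chi$.
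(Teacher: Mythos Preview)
Your proposal is correct and follows essentially the same route as the paper: show that the restriction of the lift to $G_F$ is automorphic via the automorphy lifting theorem of \cite{gee} (the paper cites Theorem 4.2.1 there directly, while you extract it from the proof of Theorem \ref{geetool}, which amounts to the same thing as explained in Remark 1), and then descend along the solvable extension $F/\mathbb{Q}$ by cyclic base change (the paper cites Arthur--Clozel \cite{clozel}, Theorem 6.2, rather than Langlands--Tunnell, but for $\mathrm{GL}_2$ this is the same result). Your added verification of the hypotheses and the discussion of level and nebentype are not in the paper's two-line proof of this corollary but are implicit in the surrounding setup.
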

\begin{proof} Since the restriction to $G_F$ of this lift is modular due to Theorem 4.2.1 in \cite{gee}, the lift itself is modular by solvable base change (\cite{clozel}, Theorem 6.2 for $n=2)$.
\end{proof}

\end{document}